\documentclass[a4paper,12pt]{amsart}

\usepackage[utf8]{inputenc}
\usepackage{enumerate}
\usepackage{latexsym}
\usepackage{amsfonts,amsmath}
\usepackage{amssymb}
\usepackage{amsthm,amscd}
\usepackage{epsfig,graphicx}
\usepackage{setspace} 
\usepackage{hyperref}
\usepackage{cite}
\usepackage{enumerate}
\usepackage{hyperref}
\usepackage{endnotes}
\usepackage{stmaryrd}
\usepackage{stackrel}
\usepackage{color}
\usepackage{setspace}

\theoremstyle{plain}

\newtheorem*{Key Lemma}{Key Lemma}
\newtheorem{Theorem}{Theorem}[section]
\newtheorem{Proposition}[Theorem]{Proposition}
\newtheorem{Corollary}[Theorem]{Corollary}
\newtheorem{Lemma}[Theorem]{Lemma}

\theoremstyle{definition}

\newtheorem{example}[Theorem]{Example}
\newtheorem{Remark}{Remark}

    \def\a{{\alpha}}
\def\b{{\beta}}

\def\Z{{\mathbb Z}}
\def\Q{{\mathbb Q}}
\def\R{{\mathbb R}}
\def\P{{\mathbb P}}

\def\deg{\text{deg}}
\def\ord{\text{ord}}
\def\div{\rm div}
\def\C{$\mathcal{C}$}
\def\c{\mathcal{C}}
\def\ID2{$(\text{ID}_2)$}
\def\IDn{$(\text{ID}_n)$}
\def\GE2{$(\text{GE}_2)$}
\def\GEn{$(\text{GE}_n)$}
\def\Id2{(\text{ID}_2)}

\def\Ge2{(\text{GE}_2)}

    \def\a{{\alpha}}
\def\b{{\beta}}

\def\deg{{\rm deg}}

\begin{document}

\title[]{Factorizations into idempotent factors \\ of matrices over Pr\"ufer domains}

\author{Laura Cossu, Paolo Zanardo}

\address{Laura Cossu, Dipartimento di Matematica ``Tullio Levi-Civita'', Via Trieste 63 - 35121 Padova, Italy}

\email{lcossu@math,unipd.it}

\address{Paolo Zanardo, Dipartimento di Matematica ``Tullio Levi-Civita'', Via Trieste 63 - 35121 Padova, Italy}

\email{pzanardo@math.unipd.it}

\subjclass[2010]{ Primary: 15A23. Secondary: 13F05, 14H50, 13F20}

\keywords{Factorization of matrices, idempotent matrices, elementary matrices, Pr\"ufer domains}

\thanks{Research supported by Dipartimento di Matematica ``Tullio Levi-Civita", Università di Padova under Progetto SID 2016 BIRD163492/16 “Categorical homological methods in the study of algebraic structures” and Progetto DOR1690814 “Anelli e categorie di moduli”. The first author is a member of the Gruppo Nazionale per le Strutture Algebriche, Geometriche e le loro Applicazioni (GNSAGA) of the Istituto Nazionale di Alta Matematica (INdAM)}

\begin{abstract}

A classical problem, that goes back to the 1960's, is to characterize the integral domains $R$ satisfying the property (ID$_n$): ``every singular $n\times n$ matrix over $R$ is a product of idempotent matrices''. Significant results in \cite{Laff1}, \cite{Ruit} and  \cite{BhasRao} motivated a natural conjecture, proposed by Salce and Zanardo \cite{SalZan}: (C) ``an integral domain $R$ satisfying (ID$_2$) is necessarily a B\'ezout domain''. Unique factorization domains, projective-free domains and PRINC domains (cf.\cite{SalZan}) verify the conjecture. We prove that an integral domain $R$ satisfying (ID$_2$) must be a Pr\"ufer domain in which every invertible $2\times 2$ matrix is a product of elementary matrices. Then we show that a large class of coordinate rings of plane curves and the ring of integer-valued polynomials Int($\mathbb{Z}$) verify an equivalent formulation of (C).

\end{abstract}

\maketitle

\section{Introduction.}

In this paper we deal with a classical problem on the factorization of square matrices over rings: characterize the integral domains $R$ that, for any integer $n>0$, satisfy the following property
\begin{enumerate}
\item[\IDn :] Every singular $n\times n$ matrix with entries in $R$ is a product of idempotent matrices.
\end{enumerate}

In a commutative setting, this natural problem was firstly attacked in the 1967 paper by J.A. Erdos \cite{Erdos}, who proved that every singular $n \times n$ matrix over a field can be written as a product of idempotent matrices. In 1983 Laffey \cite{Laff1} proved that also Euclidean domains satisfy property {\IDn}, for every $n>0$.  In 1991 Fountain \cite{Fount} studied the property {\IDn} in the class of principal ideal domains. He found some properties equivalent to {\IDn} and re-derived that $\mathbb{Z}$ and discrete valuation rings satisfy {\IDn} for all $n>0$. Many other papers were devoted to analogous problems for non-commutative rings. For instance, Hannah and O'Meara \cite{HannahOmeara} characterized products of idempotents in some classes of von Neumann regular rings. In recent years much research on this topic was made in the non-commutative setting, by several authors: Alahmadi, Facchini, Jain, Lam, Leroy, Sathaye (see \cite{AJL_1}, \cite{AJLL}, \cite{AJLS_Electronic}, \cite{AJL_2}, \cite{FL}).

We mention another classical problem on matrices factorization, somehow symmetric to the previous one: characterize the integral domains $R$ that satisfy the following property
\begin{enumerate}
\item[\GEn :] Every invertible $n\times n$ matrix with entries in $R$ is a product of elementary matrices.
\end{enumerate}

Property {\GEn} was introduced and investigated in Cohn's seminal paper \cite{Cohn}. 
When $R$ is a B\'ezout domain, a strong connection between the two properties is given by a fundamental theorem proved in 1993 by Ruitenburg \cite{Ruit}. Generalizing the results by Fountain, Ruitenburg showed that a Hermite (not necessarily commutative) domain satisfies property {\IDn} for every $n>0$ if and only if it satisfies property {\GEn} for every $n>0$. Other relations between {\IDn} and {\GEn} for non-commutative rings may be found in \cite{AJLL} and \cite{FL}. 

In the very recent paper \cite{CZZ}, the authors and U. Zannier have investigated property {\GEn} in connection with non-Euclidean principal ideal domains. Actually, some arguments in Section 4.1 are adapted from \cite{CZZ}.

In this paper we focus on {\ID2}, since this case is crucial for the general case {\IDn}. Indeed, in the class of B\'ezout domains two {\it lifting properties} from $2 \times 2$ to $n \times n$ matrices hold, for any $n > 0$. The first one, proved by Laffey (cf. \cite{Laff1}) for Euclidean domains and extended, almost {\it verbatim}, to PID's (cf. \cite{BhasRao}) and to B\'ezout domains (cf. \cite{SalZan}), states that a B\'ezout domain satisfies the property {\ID2} if and only if it satisfies {\IDn} for every $n > 0$. The second one is Theorem 7.1 of Kaplansky's paper \cite{Kap_el_div}: a B\'ezout domain satisfies the property {\GE2} if and only if it satisfies {\GEn} for all $n>0$. In particular, by Ruitenburg's theorem, {\ID2} and {\GE2} are equivalent over a B\'ezout domain.

However, it is crucial to remark that this equivalence is no longer valid outside the class of B\'ezout domains: for instance, Cohn proved (Theorem 4.1 of \cite{Cohn}) that any local domain satisfies property {\GE2}, while a local domain satisfying {\ID2} is necessarily a valuation domain (see \cite{SalZan} for the easy proof; see also Remark \ref{not-equivalent}). 

Bhaskara Rao \cite{BhasRao} recently showed that if any singular $2 \times 2$ matrix over a projective-free integral domain $R$ is a product of idempotent matrices, then $R$ is a B\'ezout domain. The results in \cite{BhasRao}, \cite{Laff1}, \cite{Ruit} suggested the following conjecture, proposed in \cite{SalZan}:
\begin{enumerate}
\item[(C)] ``If an integral domain $R$ satisfies property \ID2, then it is a B\'ezout domain.''
\end{enumerate}

Important classes of domains have been shown to satisfy conjecture (C), see \cite[Sect.\,4]{SalZan}: unique factorization domains, projective-free domains and PRINC domains. This latter class of domains, introduced by Salce and Zanardo and investigated in \cite{PerSalZan}, properly contains the other two classes. The notion of PRINC domain is strongly related to that of {\it unique comaximal factorization domain}, studied by McAdam and Swan in \cite{UCFD}.

We remark that, if the conjecture (C) is true, then, by the lifting property of B\'ezout domains, every integral domain $R$ satisfying {\ID2} would also satisfy {\IDn} for all $n>0$, and, by Ruitenburg's theorem, $R$ would also satisfy {\GEn} for every $n>0$. This further justifies our restriction to {\ID2}.

The purpose of this paper is to investigate the above conjecture, showing that we may confine ourselves to Pr\"ufer domains and finding important classes of domains satisfying (C). 

In Theorem \ref{ID2_implica_Prufer}, we prove that if an integral domain $R$ satisfies property {\ID2}, then $R$ is a Pr\"ufer domain. Therefore we can give an equivalent formulation of (C), namely,

\begin{enumerate}
\item[(C')] ``If $R$ is a Pr\"ufer domain that is not a B\'ezout domain, then it does not satisfy property {\ID2}".
\end{enumerate}

We find a general relation between properties {\ID2} and {\GE2} over any integral domain $R$. Namely, in Proposition \ref{ID2implicaGE2} we prove that if every singular $2\times 2 $ matrix over an integral domain $R$ is a product of idempotent matrices, then every invertible $2\times 2 $ matrix over $R$ is a product of elementary matrices. 

It follows that every Pr\"ufer non-B\'ezout domain, not satisfying {\GE2}, verifies the conjecture (C').

In this setting, we find a large class of algebraic curves whose coordinate rings satisfy (C'). We prove that if $\c_0$ is an affine non-singular plane curve over a field $k$, such that its points at infinity are not rational over $k$ and all conjugate by elements of the Galois group $G_{\bar{k}/k}$, then the coordinate ring $R$ of $\c_0$ does not satisfy property {\GE2}. Hence, when $R$ is not a PID, it is a Dedekind domain (so also a Pr\"ufer domain) that verifies the conjecture (C').

Moreover, properly applying some results by Cohn on discretely ordered rings (cf. \cite[Section 8]{Cohn}), in Theorem \ref{main4} we prove that the ring $\text{Int}(\Z)$ of integer-valued polynomials, one of the most important examples of Pr\"ufer domain that is not B\'ezout, verifies (C').

\section{Pr\"ufer domains and property \ID2.}

In what follows $R$ will always be a commutative integral domain. For any given ring $A$, we will denote by $A^*$ its multiplicative group of units. 

We call an integral domain $R$ {\it local} if it contains a unique maximal ideal, we do not require $R$ to be Noetherian. 
As well-known, an integral domain $R$ is said to be: a B\'ezout domain if every finitely generated ideal of $R$ is principal; a Pr\"ufer domain if every finitely generated ideal of $R$ is invertible.

We recall an easy characterization of the idempotent matrices of dimension $2$ (see \cite{SalZan}). Every $2\times 2$ non-zero non-identity idempotent matrix over an integral domain $R$ is of the form $\begin{pmatrix}
x & y\\
z & 1-x
\end{pmatrix}$, with $x(1-x)= yz$. 

We say that $x, y \in R$ are an {\it idempotent pair} if $(x \ y)$ is the first row of an idempotent matrix. It is important to observe that the $R$-ideal $(x, y)$ generated by an idempotent pair is invertible (cf. \cite{PerSalZan}, Theorem 1.3). In fact, the following equalities hold:
$$
(x, y) (1 - x, y) = (yz, xy, y(1-x), y^2) = y(z, x, 1-x, y) = yR.
$$

\begin{Remark} \label{not-equivalent}
A conjecture analogous to (C), but replacing {\ID2} with {\GE2}, is false: there exist non-B\'ezout domains satisfying the property {\GE2}. For example, take $R$ a local non-valuation domain. Then $R$ is not a B\'ezout domain, and does not satisfy {\ID2} by Corollary 5.3 of \cite{SalZan}. However, $R$ satisfies the property {\GEn}, for any $n > 0$. In fact, if $M$ is an invertible $n \times n$ matrix with entries in $R$, then every column of $M$ has an entry which is a unit, since $R$ is local and the determinant is a unit of $R$. Then, by Gauss elimination, $M$ is equivalent to an invertible diagonal matrix through multiplication by elementary matrices, hence it is a product of elementary matrices. 
\end{Remark}

In this section we prove that an integral domain that satisfies {\ID2} must be a Pr\"ufer domain. Therefore, when investigating conjecture (C), we may confine ourselves to the class of Pr\"ufer domains.

\begin{Proposition}\label{lemmaprufer}
Let $R$ be an integral domain and $a,b$ two non-zero elements of $R$. If the matrix $\begin{pmatrix}
a & b\\
0 & 0 
\end{pmatrix}$ is a product of idempotent matrices, then the ideal $(a,b)$ is an invertible ideal of $R$.
\end{Proposition}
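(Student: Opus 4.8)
The plan is to deduce the Proposition from a more flexible statement about the rows of an \emph{arbitrary} product of idempotents, proved by induction on the number of factors. For a row $\rho=(u\ v)\in R^2$ write $(\rho)$ for the ideal $(u,v)$. I would prove the

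\emph{Claim.} If $N$ is any product of idempotent $2\times 2$ matrices over $R$, then each row of $N$ is either zero or generates an invertible ideal of $R$.

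Granting the Claim the Proposition is immediate: by hypothesis $\begin{pmatrix} a & b\\ 0 & 0\end{pmatrix}$ is such a product, its first row is $(a\ b)$, and since $a\neq 0$ this row is nonzero, hence $(a,b)$ is invertible.

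To set up the induction I would first discard trivial factors: an identity factor may be deleted, while a zero factor would force the whole product to vanish, which is impossible here since $a\neq 0$. So I may assume every factor is a nonzero non-identity idempotent, i.e. of the form $\begin{pmatrix} x & y\\ z & 1-x\end{pmatrix}$ with $x(1-x)=yz$ and determinant $0$. For the base case of a single factor $E$, the excerpt already records that the first row $(x\ y)$ is an idempotent pair and hence generates an invertible ideal; for the second row $(z\ 1-x)$ I would note that $\begin{pmatrix} 1-x & z\\ y & x\end{pmatrix}$ is again idempotent (it has trace $1$ and determinant $x(1-x)-yz=0$), so $(1-x,z)$ is an idempotent pair and $(z,1-x)=(1-x,z)$ is invertible as well.

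For the inductive step I would write $N=E\,N'$ with $E$ a single idempotent and $N'$ a shorter (nonempty) product; let $\rho_1,\rho_2\in R^2$ be the rows of $N'$, which by the inductive hypothesis each either vanish or generate an invertible ideal. Since all factors of $N'$ are rank one we have $\det N'=0$, so $\rho_1,\rho_2$ are linearly dependent over the fraction field $K$ of $R$: either one of them is zero, or $\rho_2=t\,\rho_1$ for some $t\in K$. Writing $E=\begin{pmatrix} x & y\\ z & 1-x\end{pmatrix}$, the two rows of $N=E\,N'$ are $x\rho_1+y\rho_2$ and $z\rho_1+(1-x)\rho_2$; in the main case these collapse to $(x+yt)\rho_1$ and $(z+(1-x)t)\rho_1$, each a $K$-scalar multiple of $\rho_1$. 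The ideal generated by such a row is then the fractional ideal $(x+yt)\cdot(\rho_1)$, the product of the unit fractional ideal generated by a nonzero element of $K$ with the invertible ideal $(\rho_1)$; being contained in $R$ (its entries lie in $R$) yet invertible as a fractional ideal, it is an invertible ideal of $R$ — and if the scalar is $0$ the row is simply zero. The degenerate boundary cases $\rho_1=0$ or $\rho_2=0$ are handled identically, each row of $N$ becoming a scalar multiple of the single surviving invertible ideal.

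The step that requires care is precisely this passage to $K$ in the inductive step: one must use $\det N'=0$ to force the two rows of $N'$ to be $K$-proportional, so that multiplication by $E$ turns every new row into a $K$-multiple of one invertible \emph{integral} ideal $(\rho_i)$, whose invertibility then follows because a scalar multiple of an invertible fractional ideal is invertible and our ideal is integral. The remaining delicacy is purely bookkeeping — the reduction allowing one to assume all factors have determinant zero, and the tracking of the zero-row cases — while the algebraic content reduces entirely to the identity $(x,y)(1-x,y)=yR$ already established before the statement.
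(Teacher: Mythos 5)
Your proof is correct, but it takes a genuinely different route from the paper's. The paper peels off the \emph{rightmost} factor only: writing $\begin{pmatrix} a & b\\ 0 & 0\end{pmatrix} = P\begin{pmatrix} x & y\\ z & 1-x\end{pmatrix}$ with the last factor a non-identity idempotent, the first row of this equality gives a solvable linear system in two entries of $P$ whose coefficient matrix is singular, so $(a\ b)$ is proportional over the quotient field to the idempotent pair $(x\ y)$; concretely $ay=bx$, hence $y(a,b)=b(x,y)$, and invertibility of $(a,b)$ follows from that of $(x,y)$ in one step --- no induction, and no attention to the other rows, because the special shape of the target matrix (zero second row, nonzero first row) does all the work; note also that the paper never actually uses that $P$ is itself a product of idempotents. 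You instead prove the stronger invariant that \emph{every} row of an \emph{arbitrary} product of idempotents is zero or generates an invertible ideal, peeling factors from the left and propagating invertibility through $K$-proportionality of the rows of the singular matrix $N'$. What your version buys: a stronger statement, and explicit bookkeeping for the degenerate possibilities (zero rows; a factor with $x=y=0$), which the paper's displayed proportion $a/b = x/y = z/(1-x)$ and its assertion that $x,y,z,1-x$ are all nonzero quietly pass over --- for instance $\begin{pmatrix} 1 & b\\ 0 & 0\end{pmatrix}\begin{pmatrix} 0 & 0\\ z & 1\end{pmatrix} = \begin{pmatrix} bz & b\\ 0 & 0\end{pmatrix}$ is a factorization whose last factor has zero first row, a case your zero-row tracking (and your observation that $\begin{pmatrix} 1-x & z\\ y & x\end{pmatrix}$ is again idempotent, so $(z,1-x)$ is also an idempotent-pair ideal) handles cleanly. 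What the paper's version buys: brevity --- a single factorization step and the identity $(x,y)(1-x,y)=yR$ suffice. The algebraic core is the same in both: ideals generated by idempotent pairs are invertible, and invertibility of a fractional ideal is preserved under multiplication by a nonzero element of $K$.
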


\begin{proof}
Assume that 
\begin{equation*}
\begin{pmatrix}
a & b\\
0 & 0 
\end{pmatrix}=\begin{pmatrix}
p & q\\
r & s
\end{pmatrix}\begin{pmatrix}
x & y\\
z & 1-x
\end{pmatrix}
\end{equation*}
where $\begin{pmatrix}
p & q\\
r & s
\end{pmatrix}$ is a product of idempotent matrices and $\begin{pmatrix}
x & y\\
z & 1-x
\end{pmatrix}$ is a non-identity idempotent matrix.

The above equality yields the linear system (in the unknowns $p, q$):
\begin{equation}\label{a}
\begin{cases}
x p + z q = a\\
y p + (1 - x) q= b
\end{cases}
\end{equation}
Since the system (\ref{a}) is solvable and its matrix is singular,  we get   

\begin{equation}\label{rel}
 a/b = x/y = z/(1-x).
\end{equation}
In particular, the entries $x, y, z, 1-x$ are all nonzero.

It follows from (\ref{rel}) that
$$
y ( a, b) =( ay, by ) = ( bx, by )= b ( x, y).
$$
Since $x,y$ is an idempotent pair, the ideal $( x,y )$ is invertible, as observed above. We conclude that $( a, b)$ is also invertible.
\end{proof}

A well known result by Gilmer (cf. \cite[Th. 22.1]{Gilmer}) says that $R$ is a Pr\"{u}fer domain if and only if every two-generated ideal of $R$ is invertible, hence from the preceding proposition we readily derive the following useful result.

\begin{Theorem}\label{ID2_implica_Prufer}
If $R$ is an integral domain satisfying property \ID2, then $R$ is a Pr\"ufer domain.
\end{Theorem}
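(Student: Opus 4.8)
The plan is to reduce Theorem~\ref{ID2_implica_Prufer} to Proposition~\ref{lemmaprufer} via Gilmer's characterization of Pr\"ufer domains, which states that $R$ is Pr\"ufer if and only if every two-generated ideal of $R$ is invertible. Since a one-generated (principal) ideal $(a)$ is automatically invertible (its inverse is $(a^{-1})$ in the fraction field), and the zero ideal requires no attention, the entire problem amounts to showing that for \emph{arbitrary} nonzero $a,b \in R$, the ideal $(a,b)$ is invertible.

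The key step is to manufacture, from the pair $(a,b)$, a singular $2 \times 2$ matrix to which property \Id2{} can be applied so that Proposition~\ref{lemmaprufer} delivers invertibility of $(a,b)$. The natural candidate is exactly the matrix $\begin{pmatrix} a & b \\ 0 & 0 \end{pmatrix}$ appearing in the proposition: its determinant is $0$, so it is singular, and by hypothesis \Id2{} it is a product of idempotent matrices. Proposition~\ref{lemmaprufer} then applies directly and yields that $(a,b)$ is invertible. I would first dispose of the degenerate cases: if $a = b = 0$ the ideal is $(0)$, trivially invertible in the relevant sense, and if exactly one of $a,b$ is zero then $(a,b)$ is principal, hence invertible. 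Thus the only substantive case is $a \neq 0 \neq b$, which is precisely the hypothesis under which Proposition~\ref{lemmaprufer} operates.

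Having established that every two-generated ideal $(a,b)$ is invertible, I would invoke Gilmer's theorem (\cite[Th.~22.1]{Gilmer}) to conclude that $R$ is a Pr\"ufer domain, completing the proof. The whole argument is short once the machinery is in place.

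The only point requiring a little care is recognizing that no genuine obstacle remains: the heavy lifting has already been done in Proposition~\ref{lemmaprufer}, whose proof extracted invertibility of $(a,b)$ from the relation $a/b = x/y = z/(1-x)$ forced by the singularity of the idempotent factor and the solvability of the associated linear system~\eqref{a}. The main conceptual insight is simply that \emph{singularity is free}: by placing $(a,b)$ as the top row of a matrix with a zero bottom row, we obtain a singular matrix for \emph{every} choice of $a,b$, so the hypothesis \Id2{} can be leveraged uniformly. What might otherwise look like the hard part---controlling an arbitrary product of idempotents---is already encapsulated in the proposition, so at the level of the theorem there is essentially nothing left to prove beyond the case analysis and the citation of Gilmer.
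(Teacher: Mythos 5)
Your proposal is correct and follows essentially the same route as the paper: apply {\ID2} to the singular matrix $\begin{pmatrix} a & b\\ 0 & 0 \end{pmatrix}$, invoke Proposition \ref{lemmaprufer} to get invertibility of $(a,b)$, and conclude via Gilmer's characterization \cite[Th.~22.1]{Gilmer}. The only difference is your explicit (and harmless) treatment of the degenerate cases where $a$ or $b$ vanishes, which the paper leaves implicit.
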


\begin{proof}
Assume that $R$ satisfies property \ID2. Then, every matrix of the form $\begin{pmatrix}
a & b\\
0 & 0
\end{pmatrix}$ with $a,b$ non-zero elements of $R$, is a product of idempotent matrices. Thus, it follows from Proposition \ref{lemmaprufer} that every two-generated ideal of $R$ must be invertible, i.e. $R$ must be a Pr\"ufer domain.
\end{proof}

The preceding theorem provides an equivalent formulation of the conjecture (C):

\begin{enumerate}
\item[(C')] Every Pr\"ufer non-B\'ezout domain $R$ does not satisfy property {\ID2}.
\end{enumerate}

\section{Property {\ID2} implies {\GE2}.}

As recalled in the introduction, we know that the factorization properties {\IDn} and {\GEn} are equivalent inside the class of B\'ezout domains. 
In this section we prove that property {\ID2} implies property {\GE2} over an arbitrary integral domain.

\medskip

Let us recall an important result due to Kaplansky \cite{Kap}:

\begin{Lemma}[Lemma 1 of \cite{Kap}]\label{Kaplemm} 
Let $R$ be an integral domain and let $I_1,\dots,I_m,$ $J_1,\dots ,J_m$ be (integral or fractional) ideals of $R$ such that
\[ I_1\oplus \cdots \oplus I_m \cong J_1\oplus \cdots \oplus J_m\]
as $R$-modules. Then
\[I_1\cdots I_m \cong J_1 \cdots J_m.\]
\end{Lemma}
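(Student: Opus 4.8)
The plan is to attach to every $R$-module that splits as a direct sum of rank-one (fractional) ideals a single fractional ideal, invariant under isomorphism, which turns out to be exactly the product $I_1\cdots I_m$. This is the classical determinant (Steinitz class) idea, but I would carry it out with care: the familiar Dedekind identity $I\oplus J\cong R\oplus IJ$ relies on invertibility of the ideals and is unavailable here, and the content of the lemma is precisely that only the \emph{isomorphism class} of the product survives.

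First I would work inside the fraction field $K$ of $R$. Each $I_j$ is a nonzero $R$-submodule of $K$, hence torsion-free of rank one, so $M:=I_1\oplus\cdots\oplus I_m$ is torsion-free of rank $m$ and $M\otimes_R K\cong K^m$. I would then form the top exterior power $\wedge^m_R M$ and consider the canonical map
\[ \theta_M\colon \wedge^m_R M \longrightarrow \wedge^m_R M\otimes_R K \cong \wedge^m_K(M\otimes_R K)\cong K, \]
the first arrow being localization and the last identification coming from the embeddings $I_j\hookrightarrow K$. Its image $\mathrm{im}\,\theta_M$ is a nonzero $R$-submodule of $K$, i.e. a fractional ideal. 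The key computation is that $\mathrm{im}\,\theta_M=I_1\cdots I_m$: since $\wedge^m_R M$ is generated by decomposables $v_1\wedge\cdots\wedge v_m$, and $\theta_M$ sends such an element to the determinant $\det\bigl(v_i^{(j)}\bigr)$ of the coordinate matrix with $v_i^{(j)}\in I_j\subseteq K$, every value lies in the product ideal, while the diagonal choices $v_i=a_i$ (with $a_i\in I_i$ placed in the $i$-th summand) already produce all generators $a_1\cdots a_m$ of $I_1\cdots I_m$.

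Next I would prove invariance. An $R$-isomorphism $f\colon M\to M'$ induces an isomorphism $\wedge^m_R f$ carrying $\wedge^m_R M$ onto $\wedge^m_R M'$, and after tensoring with $K$ it becomes a $K$-linear automorphism of the one-dimensional space $\wedge^m_K(M\otimes_R K)$, i.e. multiplication by some $\lambda\in K^{*}$. Chasing the resulting commutative square, and using that $\wedge^m_R f$ is surjective onto $\wedge^m_R M'$, gives $\mathrm{im}\,\theta_{M'}=\lambda\cdot\mathrm{im}\,\theta_M$ inside $K$; since multiplication by $\lambda$ is an $R$-module isomorphism of fractional ideals, $\mathrm{im}\,\theta_M\cong\mathrm{im}\,\theta_{M'}$. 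Applying this to the given isomorphism $I_1\oplus\cdots\oplus I_m\cong J_1\oplus\cdots\oplus J_m$ and invoking the identification of the previous step yields $I_1\cdots I_m\cong J_1\cdots J_m$, as required.

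The step I expect to be the main obstacle is the correct handling of the exterior power: for a rank-one ideal $I$ the module $\wedge^2 I$ is in general nonzero (merely torsion), so one cannot argue naively that the cross terms in $\wedge^m(\bigoplus I_j)$ vanish. The device that overcomes this is to pass from $\wedge^m_R M$ to its torsion-free rank-one image in $K$ under $\theta_M$; everything hinges on verifying \emph{simultaneously} that this image equals the product ideal and that it depends only on the isomorphism type of $M$. I would treat all $m$ at once in this way rather than reducing to the case $m=2$ by induction, since the exterior-power construction is uniform in $m$ while a direct cancellation argument seems hard to make work in the absence of invertibility.
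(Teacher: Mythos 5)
Your proof is correct. There is, however, nothing in the paper to compare it with: the lemma is quoted verbatim as Lemma 1 of Kaplansky's 1952 paper \cite{Kap} and no proof is reproduced, so the relevant benchmark is Kaplansky's original argument. That argument is more elementary than yours and needs no exterior algebra: for nonzero $R$-submodules $I,J$ of the fraction field $K$, every $R$-homomorphism $I\to J$ is multiplication by some $q\in K$ with $qI\subseteq J$, so the given isomorphism $\varphi\colon I_1\oplus\cdots\oplus I_m\to J_1\oplus\cdots\oplus J_m$ is represented by a matrix $A=(a_{ij})$ over $K$ with $a_{ij}I_j\subseteq J_i$, and $\varphi^{-1}$ by $B=(b_{ij})$ with $b_{ij}J_j\subseteq I_i$ and $AB=1$. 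Expanding the determinant, each monomial $\pm a_{1\sigma(1)}\cdots a_{m\sigma(m)}x_1\cdots x_m$ (with $x_j\in I_j$) is a product of factors $a_{i\sigma(i)}x_{\sigma(i)}\in J_i$, whence $\det(A)\,I_1\cdots I_m\subseteq J_1\cdots J_m$; symmetrically $\det(B)\,J_1\cdots J_m\subseteq I_1\cdots I_m$, and $\det(A)\det(B)=1$, so multiplication by $\det(A)$ is the desired isomorphism. Your $\wedge^m$ construction is a coordinate-free repackaging of exactly this determinant: your scalar $\lambda$ is Kaplansky's $\det(A)$, and your computation $\mathrm{im}\,\theta_M=I_1\cdots I_m$ is the same expansion read off from decomposable wedges. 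What your version buys is that isomorphism invariance is visible without choosing matrix representatives, and you correctly spotted the one genuine trap --- $\wedge^m$ of a sum of non-invertible ideals has torsion, so one must pass to the image in $\wedge^m_K(M\otimes_R K)\cong K$ rather than claim the cross terms vanish. What Kaplansky's version buys is brevity and complete avoidance of homological machinery; both work verbatim for arbitrary nonzero $R$-submodules of $K$, with no invertibility hypothesis anywhere.
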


From the above lemma, we immediately get the following corollary

\begin{Corollary}\label{corKap} 
Let $R$ be an integral domain and $J$ a fractional ideal of $R$. If $R\oplus R \cong J\oplus R$, then $J$ is a free $R$-module
\end{Corollary}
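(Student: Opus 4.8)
The plan is to obtain this as an immediate specialization of Lemma \ref{Kaplemm} with $m=2$. First I would set $I_1=I_2=R$ and $J_1=J$, $J_2=R$. With these choices the hypothesis $R\oplus R\cong J\oplus R$ is exactly the module isomorphism $I_1\oplus I_2\cong J_1\oplus J_2$ demanded by the lemma, so the lemma applies without any further work.

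Invoking Lemma \ref{Kaplemm} then yields $I_1 I_2\cong J_1 J_2$ as $R$-modules, that is, $R\cdot R\cong J\cdot R$. The only point that requires a moment's care is the simplification of these products of fractional ideals: one has $R\cdot R=R$ and $J\cdot R=J$, so the conclusion collapses to the single isomorphism $J\cong R$.

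Finally, an isomorphism $J\cong R$ of $R$-modules exhibits $J$ as a free $R$-module (of rank one), which is precisely the assertion of the corollary. There is no genuine obstacle here, since Kaplansky's lemma does all the heavy lifting; the entire content of the proof is the correct bookkeeping of which ideals play the roles of the $I_i$ and $J_i$, together with the elementary observation that multiplying by the unit ideal $R$ leaves a fractional ideal unchanged.
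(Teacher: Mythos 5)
Your proposal is correct and coincides with the paper's own proof: both apply Lemma \ref{Kaplemm} with $m=2$, taking $I_1=I_2=R$ and $J_1=J$, $J_2=R$, to get $R\cdot R\cong J\cdot R$, and then simplify to $J\cong R$. Your write-up merely spells out the bookkeeping that the paper leaves implicit.
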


\begin{proof} 
Lemma \ref{Kaplemm} shows that $R\cdot R \cong J\cdot R$, hence $J\cong R$.
\end{proof}

We are now in the position to prove the next result.

\begin{Proposition}\label{proposition_H2} 
Let $A$ and $B$ be free direct summands of rank one of the free $R$-module $R^2$ with $A \cap B=0$. Then there exists an endomorphism $\beta$ of $R^2$ with $Ker(\beta)=B$ and $Im(\beta)=A$.
\end{Proposition}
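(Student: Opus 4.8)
The plan is to construct $\beta$ by killing $B$ and sending a complement of $B$ isomorphically onto $A$. Since $B$ is a free direct summand of $R^2$, I would first fix a decomposition $R^2 = B \oplus C$ and let $\pi \colon R^2 \to C$ be the projection along $B$. The endomorphism I aim to produce is the composition
\[
\beta \;=\; \iota \circ \phi \circ \pi,
\]
where $\iota \colon A \hookrightarrow R^2$ is the inclusion and $\phi \colon C \to A$ is an isomorphism to be exhibited. For such a $\beta$ one has $Ker(\beta) = Ker(\pi) = B$ (because $\iota$ and $\phi$ are injective) and $Im(\beta) = \iota(\phi(C)) = A$ (because $\phi$ is onto $A$), so the statement follows \emph{provided} the isomorphism $\phi$ exists, i.e.\ provided $C \cong A$.

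The key step, and the main obstacle, is therefore to show that the complement $C$ is \emph{free} of rank one: a priori $C$ is only a rank-one direct summand of $R^2$, and over a general domain such a module need not be free, so an abstract isomorphism $C \cong A$ is not automatic. This is exactly where Corollary \ref{corKap} enters. As a direct summand of $R^2$ complementary to the rank-one module $B$, the module $C$ is finitely generated, torsion-free and of rank one, hence isomorphic to a (fractional) ideal $J$ of $R$. From $R^2 = B \oplus C$ and $B \cong R$ I get $R \oplus R \cong R^2 \cong R \oplus C \cong J \oplus R$, and Corollary \ref{corKap} then forces $J$, and hence $C$, to be free of rank one.

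Once $C \cong R$ is established, the conclusion is formal: $A$ is free of rank one by hypothesis, so $A \cong R \cong C$ and an isomorphism $\phi \colon C \to A$ may be chosen. Feeding $\phi$ into the composition above yields the desired $\beta$, and the verification of $Ker(\beta) = B$ and $Im(\beta) = A$ is immediate from the direct-sum decomposition. I would also record the role of the hypothesis $A \cap B = 0$: for $a \in A$ one has $\beta(a) = \phi(\pi(a))$, so $\beta(a) = 0$ forces $\pi(a) = 0$, i.e.\ $a \in B$, whence $a \in A \cap B = 0$; thus $\beta$ restricts to an \emph{injective} map on $A$, a non-degeneracy that is presumably what makes the pair $(A,B)$ useful downstream, even though the bare existence of $\beta$ only needs $B$ to be a summand with free complement.

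In summary, the entire argument reduces to the single nontrivial point that the complement of $B$ in $R^2$ is free, which is precisely the content supplied by Kaplansky's Lemma \ref{Kaplemm} via Corollary \ref{corKap}; everything else is the formal composition of a projection, an isomorphism of rank-one free modules, and an inclusion.
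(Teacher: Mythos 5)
Your proof is correct and takes essentially the same route as the paper: both decompose $R^2 = B \oplus B'$, apply Corollary \ref{corKap} (via Kaplansky's Lemma \ref{Kaplemm}) to conclude that the complement of $B$ is free of rank one, and then define $\beta$ as the projection along $B$ followed by an isomorphism onto $A$. Your side remark is also accurate: the hypothesis $A \cap B = 0$ is not used in the paper's construction either, and is carried along only for the downstream application to property (HF$_2$).
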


\begin{proof} 
Let $A$ and $B$ be free direct summands of rank one of $R^2$, i.e. $R\oplus R= A\oplus A'= B\oplus B'$ with $A\cong R$ and $B\cong R$. Hence, by Corollary \ref{corKap} $A'$ and $B'$ are also free of rank one. Let $p: B \oplus B' \to B'$ be the canonical projection with kernel $B$, and $f : B' \to A$ an isomorphism. Let $g$ be the endomorphism of $R^2$ defined by $g: b + b' \mapsto f(b')$, for $b \in B$, $b' \in B'$. Then the endomorphism $\b = g \circ p$ has kernel $B$ and image $A$. 
\end{proof}

Let us recall that, by Theorem 3.4 of \cite{SalZan}, an integral domain $R$ satisfies {\GEn}, with $n>0$, if and only if it satisfies the following property:
\begin{enumerate}
\item[(HF$_n$)] For any free direct summand $A,B$ of the free $R$-module $R^n$, of ranks $r$ and $n-r$ respectively ($1\leq r \leq n$), with $A \cap B = 0$, there exists an endomorphism $\beta$ of $R^n$ with Ker$(\beta)=B$ and Im$(\beta)=A$, that is a product of idempotent endomorphisms of rank $r$.
\end{enumerate}

We get the following important consequence of Proposition \ref{proposition_H2} and Theorem 3.4 in \cite{SalZan}.

\begin{Proposition}\label{ID2implicaGE2}
If $R$ satisfies ($\text{ID}_2$), then it also satisfies ($\text{GE}_2$).
\end{Proposition}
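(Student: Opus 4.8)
The plan is to reduce \GE2 to the criterion recalled just before the statement: by Theorem 3.4 of \cite{SalZan}, an integral domain satisfies \GE2 exactly when it satisfies property (HF$_2$). So it suffices to show that \ID2 implies (HF$_2$). The substantive case of (HF$_2$) is $r=1$: given free rank-one direct summands $A,B$ of $R^2$ with $A\cap B=0$, I must produce an endomorphism with kernel $B$ and image $A$ that is a product of \emph{rank-one} idempotent endomorphisms. The remaining case $r=2$ is degenerate, since then $B=0$ and $A=R^2$, and $\beta=\mathrm{id}$ works, being itself a rank-two idempotent.

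First I would invoke Proposition \ref{proposition_H2} to obtain an endomorphism $\beta$ of $R^2$ with $\mathrm{Ker}(\beta)=B$ and $\mathrm{Im}(\beta)=A$. Fixing the standard basis of $R^2$, I identify $\beta$ with its matrix $M$ over $R$. Since $B\neq 0$, the map $\beta$ is not injective, so $M$ is singular; hence \ID2 lets me write $M$ as a product of idempotent $2\times 2$ matrices. Note that no appeal to the Pr\"ufer property (Theorem \ref{ID2_implica_Prufer}) is needed here: the freeness required to apply Proposition \ref{proposition_H2} is part of the hypotheses of (HF$_2$), and the singularity of $M$ is all that is needed to apply \ID2.

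The crux is a rank bookkeeping step. Because $\mathrm{Im}(\beta)=A$ is free of rank one, $\beta$ has rank one, so $M\neq 0$ and $M\neq I$. In the idempotent factorization of $M$, no factor can be the zero matrix, for otherwise the product would vanish, contradicting $M\neq 0$; and any identity factor may be deleted without changing the product, with at least one factor surviving since $M\neq I$. By the characterization of $2\times 2$ idempotents recalled after the definition of Pr\"ufer domain, every surviving factor, being a nonzero non-identity idempotent, has rank one. Translating back to endomorphisms, $\beta$ is a product of rank-one idempotent endomorphisms with kernel $B$ and image $A$, which is precisely (HF$_2$); Theorem 3.4 of \cite{SalZan} then yields \GE2. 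I expect the only delicate point to be exactly this rank argument — guaranteeing that the idempotent factors have rank \emph{one}, not merely that some idempotent factorization exists — while the rest is a direct assembly of Proposition \ref{proposition_H2}, property \ID2, and the cited equivalence.
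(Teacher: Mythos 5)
Your proof is correct and follows essentially the same route as the paper's: obtain $\beta$ from Proposition \ref{proposition_H2}, factor its singular matrix into idempotents using {\ID2}, and pass through (HF$_2$) via Theorem 3.4 of \cite{SalZan} to conclude {\GE2}. The only difference is that you spell out the rank bookkeeping the paper leaves implicit --- discarding zero and identity factors and noting that any surviving nonzero non-identity $2\times 2$ idempotent has rank one, as required by the ``rank $r$'' clause in (HF$_2$) --- together with the degenerate case $r=2$; both checks are valid and slightly more careful than the paper's one-line conclusion.
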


\begin{proof} 
Let $A$, $B$ be direct summands of $R^2$ of rank $1$, with $A \cap B = 0$. By Proposition \ref{proposition_H2} there exists an endomorphism $\beta$ of $R^2$ with $Ker(\beta)=B$ and $Im(\beta)=A$. By property {\ID2} the singular matrix associated to $\beta$ (with respect to some basis) is a product of idempotent matrices, hence $\beta$ is a product of idempotent endomorphisms. Therefore property ($\text{HF}_2$) holds, hence Theorem 3.4 of \cite{SalZan} shows that ($\text{GE}_2$) holds, as well.
\end{proof}

From Proposition \ref{ID2implicaGE2} and Theorem \ref{ID2_implica_Prufer}, we immediately get
 
\begin{Corollary}\label{final_corollary}
If $R$ is an integral domain satisfying property {\ID2}, then $R$ is a Pr\"ufer domain satisfying property {\GE2}.
\end{Corollary}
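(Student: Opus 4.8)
The plan is to observe that this corollary is nothing more than the conjunction of the two main results already proved, both of which take property {\ID2} as their single hypothesis. So I would begin by assuming that $R$ satisfies {\ID2}, and then note that the conclusion splits into two independent assertions: that $R$ is a Pr\"ufer domain, and that $R$ satisfies {\GE2}. Each is recorded verbatim in an earlier statement, so no new argument is required beyond citing them together.

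For the Pr\"ufer half, I would invoke Theorem \ref{ID2_implica_Prufer} directly. Its content is that {\ID2} forces every matrix of the shape $\begin{pmatrix} a & b \\ 0 & 0 \end{pmatrix}$ (with $a,b \neq 0$) to be a product of idempotents, whence by Proposition \ref{lemmaprufer} the two-generated ideal $(a,b)$ is invertible; Gilmer's criterion \cite[Th.\,22.1]{Gilmer} then upgrades ``every two-generated ideal invertible'' to ``$R$ Pr\"ufer''. For the {\GE2} half, I would invoke Proposition \ref{ID2implicaGE2}, which is the substantive input: it uses Proposition \ref{proposition_H2} to produce, for any pair of disjoint rank-one free summands $A,B$ of $R^2$, an endomorphism $\beta$ with $Ker(\beta)=B$ and $Im(\beta)=A$, factors the associated singular matrix into idempotents via {\ID2}, and concludes through the equivalence (HF$_2$) $\Leftrightarrow$ {\GE2} of Theorem 3.4 in \cite{SalZan}.

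Since both cited results run off the same hypothesis {\ID2}, the two conclusions may be asserted simultaneously, giving the corollary in a single line. The main point to be careful about — rather than an obstacle — is simply that there is no extra assumption to check: the genuine work has already been carried out in Theorem \ref{ID2_implica_Prufer} and Proposition \ref{ID2implicaGE2}, and this statement merely packages them. I would therefore expect the proof to consist of two sentences invoking these results and nothing more.
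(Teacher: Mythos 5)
Your proposal is correct and matches the paper exactly: the corollary is stated there as an immediate consequence of Theorem \ref{ID2_implica_Prufer} (for the Pr\"ufer part) and Proposition \ref{ID2implicaGE2} (for the {\GE2} part), with no further argument. Your additional recap of how those two results are themselves proved is accurate but not needed for this statement.
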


The preceding corollary shows that every Pr\"ufer non-B\'ezout domain, not satisfying {\GE2}, does not satisfy {\ID2}, hence verifies the conjecture (C').

\section{Classes of rings verifying (C').}

In 1966, Cohn \cite[Theorem 6.1]{Cohn} proved that the rings of integers $O$ in $\mathbb{Q}(\sqrt{-d})$, with $d$ a squarefree positive integer, do not satisfy property {\GE2}, unless $d=1,2,3,7,11$ (values for which $O$ is an Euclidean domain). If $d$ is also different from $19,43,67, 163$, then the rings of integers $O$ in $\mathbb{Q}(\sqrt{-d})$ are not principal ideal domains. Thus they are Dedekind domains, not UFD and so non-B\'ezout, that do not satisfy the property {\GE2}, thus verifying (C'). 

\medskip

In this section we prove that also the coordinate rings of a large class of plane curves and the ring of integer-valued polynomials $\text{Int}(\Z)$ verify the conjecture (C').

\subsection{The case of the coordinate rings.}

We fix the notation and recall some basic facts on coordinate rings. We refer to \cite{Fulton} and \cite{Silverman} for more details and proofs. 

With $k$ we will denote a perfect field, with $\bar{k}$ its algebraic closure, with $\c$ a smooth projective curve over $k$, with $\c_0$ an affine part of $\c$ and with $\c_\infty$ the corresponding set of points at infinity. The affine coordinate ring of $\c_0$ and its quotient field, namely, the function field of $\c_0$, are denoted respectively as $R=k[\c_0]$ and $k(\c_0)$. It is well known that $k[\c_0]$ is a Dedekind domain. 

A $k$-rational divisor $D$ of the smooth curve $\c$ is the sum
\[D=\sum_{P\in\c}n_P P\]
with $n_P\in\Z$, almost all $n_P=0$, and $n_P=n_Q$ if $P=gQ$ for some $g\in G_{\bar{k}/k}$. We will call $\text{Div}_k(\c)$ the set of all $k$-rational divisor of $\c$.
A divisor $D\in\text{Div}_k(\c)$ is said to be \textit{principal} if there exists $F\in k(\c_0)^*$ such that
\[D= {\rm div}(F)=\sum_{P\in\c}\text{ord}_P(F) P.\]

Recall the following
\begin{Proposition}\label{gradodivisore} Let {\C} be a smooth curve and $F\in k(\c_0)^*$. Then:
\begin{enumerate}[(i)]
\item $\div(F)=0$ iff $F\in k^*$.
\item $\deg(\div(F))=0$.
\end{enumerate}
\end{Proposition}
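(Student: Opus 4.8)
The plan is to prove Proposition \ref{gradodivisore}, the two classical facts about principal divisors on a smooth projective curve. These are standard results from the theory of algebraic curves, and the natural strategy is to reduce everything to the function field $k(\c_0)$ and the discrete valuations $\ord_P$ attached to the points $P \in \c$.

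\textbf{Part (i).}
The implication $F \in k^* \Rightarrow \div(F) = 0$ is immediate: a nonzero constant is a unit in every local ring $\mathcal{O}_P$, so $\ord_P(F) = 0$ for all $P$, whence $\div(F) = 0$. For the converse, suppose $\div(F) = 0$, so $F$ has neither zeros nor poles on $\c$. I would interpret $F$ as a morphism $F \colon \c \to \P^1$. If $F$ were nonconstant it would be a finite surjective morphism of smooth projective curves, and hence would take every value in $\P^1(\bar k)$; in particular it would have a zero, forcing $\ord_P(F) > 0$ at some $P$ and contradicting $\div(F) = 0$. Equivalently, and more algebraically, a function with no poles lies in the global regular functions $H^0(\c, \mathcal{O}_\c) = \bar k$ (since $\c$ is projective and integral), so $F \in \bar k^*$; combined with $F \in k(\c_0)$ one concludes $F \in k^*$. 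The key point to justify carefully is that a rational function on a smooth projective curve with empty divisor must be constant, which is where completeness (projectivity) of $\c$ is essential.

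\textbf{Part (ii).}
The assertion $\deg(\div(F)) = 0$ says that $F$ has as many zeros as poles, counted with multiplicity and with the appropriate degrees. Again viewing $F$ as a nonconstant morphism $F \colon \c \to \P^1$ (the case $F \in k^*$ being trivial by part (i)), I would use the fundamental fact that for a finite morphism of smooth projective curves the number of points in a fibre, counted with multiplicities, is independent of the chosen point of the target and equals the degree $d = [\,k(\c_0) : k(F)\,]$ of the morphism. The divisor of zeros $F^{-1}(0)$ and the divisor of poles $F^{-1}(\infty)$ are exactly the fibres over $0$ and $\infty$, so each has degree $d$; subtracting gives $\deg(\div(F)) = d - d = 0$.

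The main obstacle is that part (ii) rests on the nontrivial theorem that the fibre degree of a finite morphism of smooth projective curves is constant. This is essentially a form of the statement that $\sum_P \ord_P(F) \cdot \deg(P) = 0$, and a fully self-contained proof would require either the machinery of ramification and the constancy of fibre degrees, or a direct valuation-theoretic argument over the completed local rings. Since these are standard results for which the paper has already referred the reader to \cite{Fulton} and \cite{Silverman}, I would cite them there rather than reproving them, and devote the written proof mainly to the reduction to the morphism $\c \to \P^1$ and the identification of $\div(F)$ with the difference of the two fibres.
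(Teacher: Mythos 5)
Your sketch is correct, and it matches the paper's treatment in spirit: the paper states Proposition \ref{gradodivisore} without proof, as a standard recalled fact, deferring (as you ultimately do) to the references \cite{Fulton} and \cite{Silverman}. Your outline --- (i) a function with empty divisor has no poles, hence lies in $H^0(\c,\mathcal{O}_\c)$, which is the constant field by projectivity; (ii) identify $\div(F)$ with the difference of the fibres of $F\colon \c\to\P^1$ over $0$ and $\infty$ and invoke constancy of fibre degree --- is exactly the classical argument those references give. The only point worth making explicit in (i) is that the step from $F\in\bar k^*\cap k(\c_0)$ to $F\in k^*$ uses that $k$ is the full field of constants of $k(\c_0)$, i.e.\ that $\c$ is geometrically integral over $k$; this is the standing convention for curves over a (perfect) field in \cite{Silverman}, but it is the hypothesis doing the work there.
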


Let us consider a smooth projective curve $\c$ over the perfect field $k$ and its affine coordinate ring $R=k[\c_0]$. Define, for any $z\in R$
\begin{equation}\label{d(z)}
d(z)= -\sum_{P\in C_{\infty}}\ord_P(z).
\end{equation}

We will need some lemmas.

\begin{Lemma}\label{kring}
Let $R = k[\c_0]$ be the affine coordinate ring of the smooth curve $\c$ over the field $k$. If the points at infinity of ${\c}$ are non-rational and all conjugate by elements of the Galois group $G_{\bar{k}/k}$, then $R^*=k^*$.
\end{Lemma}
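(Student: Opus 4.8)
The plan is to describe $R^{*}$ in terms of divisors and then let the conjugacy hypothesis on $\c_\infty$ collapse the divisor of a unit, invoking Proposition \ref{gradodivisore} to finish.

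First I would recall that $R = k[\c_0]$ consists of those $z \in k(\c_0)$ that are regular at every point of $\c_0$, so that any pole of such a $z$ can only occur at a point of $\c_\infty$. Hence if $z \in R^{*}$, then $z$ and $z^{-1}$ both lie in $R$: the regularity of $z^{-1}$ on $\c_0$ means $z$ has no zero at any affine point, while the regularity of $z$ means $z$ has no pole there. Therefore $\ord_P(z) = 0$ for every $P \in \c_0$, and the support of $\div(z)$ is contained in $\c_\infty$.

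Next I would use that $z \in k(\c_0)^{*}$ is defined over the perfect field $k$, so its divisor is Galois-stable; that is, $\div(z) \in \text{Div}_k(\c)$. By the defining property of $k$-rational divisors, points in the same $G_{\bar{k}/k}$-orbit receive equal coefficients. Since by hypothesis the points of $\c_\infty$ form a single orbit, there is one integer $n$ with $\div(z) = n \sum_{P \in \c_\infty} P$. Then I would invoke Proposition \ref{gradodivisore}(ii), which gives $\deg(\div(z)) = 0$; as $\deg\big(n \sum_{P \in \c_\infty} P\big) = n\,|\c_\infty|$ and $\c_\infty \neq \emptyset$, this forces $n = 0$, hence $\div(z) = 0$. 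Proposition \ref{gradodivisore}(i) then yields $z \in k^{*}$, proving $R^{*} \subseteq k^{*}$; the inclusion $k^{*} \subseteq R^{*}$ is immediate, so $R^{*} = k^{*}$.

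I expect the main obstacle to be the first two steps: justifying carefully that a unit of the Dedekind ring $k[\c_0]$ has neither zeros nor poles on the affine part, so that its divisor is concentrated at infinity, and verifying that $\div(z)$ is genuinely $k$-rational, which rests on the perfectness of $k$ ensuring that the divisor map commutes with the Galois action. Once these standard facts are secured, the hypothesis that $\c_\infty$ is a single Galois orbit does the decisive work by forcing the divisor to be a common multiple of $\sum_{P\in\c_\infty}P$, after which the degree-zero relation closes the argument at once.
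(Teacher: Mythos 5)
Your proof is correct and follows essentially the same route as the paper's: a unit has divisor supported on $\c_\infty$, the single-Galois-orbit hypothesis forces $\div(z)$ to be an integer multiple of $\sum_{P\in\c_\infty}P$, and $\deg(\div(z))=0$ then forces $\div(z)=0$, whence $z\in k^{*}$ by Proposition \ref{gradodivisore}(i). You spell out more carefully than the paper the two standard background facts (that a unit has neither zeros nor poles on $\c_0$, and that $\div(z)$ is $k$-rational), but the argument is the same.
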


\begin{proof}
Let us assume that all the elements of $\c_\infty$ are non-rational and conjugate by elements of $G_{\bar{k}/k}$. It follows that any nonzero rational divisor at infinity has the form $m \sum_{P \in \c_\infty} P$, for some nonzero integer $m$. Let $u$ be a unit of $R$. Then $u$ has no zeroes in $\c_0$, hence $\div(u)$ is a divisor at infinity. Then $\deg(\div(u)) = 0$ implies $\div(u) = 0$, hence $u \in k^*$ by Proposition \ref{gradodivisore} (i). 
\end{proof}

\begin{Lemma}\label{dfunction}
In the above notation, if the points at infinity of ${\c}$ are non-rational and all conjugate by elements of the Galois group $G_{\bar{k}/k}$, then the map $d : R \to \mathbb{N} \cup \{ - \infty \}$ defined by {\rm (\ref{d(z)})} satisfies the following properties:
\begin{enumerate}
\item[(d1)] $d(z)= -\infty$ if and only if $z=0$,
\item[(d2)] $d(z) = 0$ if and only if $z \in k^*$,
\item[(d3)] $d(z + t) \leq \max \{d(z), d(t)\}$,
\item[(d3')]$d(z + t) = \max \{d(z), d(t)\}$ whenever $d(z) \neq d(t)$
\item[(d4)] $d(zt)= d(z)+d(t)$,
\end{enumerate}
for any $z,t\in R$
\end{Lemma}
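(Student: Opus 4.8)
The plan is to establish that the map $d$ behaves like a (logarithmic) degree function, so that each property follows from the valuation-theoretic interpretation of the orders $\ord_P$ at the points at infinity together with the degree-zero relation for principal divisors. The first and most basic observation is that for $z \in R = k[\c_0]$, the element $z$ is regular on the affine part $\c_0$, so $z$ has no poles there; all its poles lie in $\c_\infty$, and hence $\ord_P(z) \geq 0$ for every $P \in \c_0$ while $\ord_P(z)$ may be negative for $P \in \c_\infty$. Thus $d(z) = -\sum_{P \in \c_\infty} \ord_P(z)$ collects the total pole order at infinity, which is a nonnegative integer for $z \neq 0$; this shows $d$ takes values in $\mathbb{N} \cup \{-\infty\}$ as asserted.

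First I would dispatch the easy properties. For \emph{(d1)}, $z = 0$ gives $d(z) = -\infty$ by the usual convention $\ord_P(0) = -\infty$ (equivalently, $d(0)$ is set to $-\infty$), and conversely any nonzero $z$ has finite pole order at infinity. For \emph{(d2)}, I would use Lemma \ref{kring}: if $d(z) = 0$ then $z \neq 0$ has no poles at infinity, and being regular on $\c_0$ it has no poles anywhere, so $\div(z) \geq 0$; combined with $\deg(\div(z)) = 0$ from Proposition \ref{gradodivisore}(ii) this forces $\div(z) = 0$, whence $z \in k^*$ by Proposition \ref{gradodivisore}(i). The converse is immediate since units of $k$ have trivial divisor. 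Property \emph{(d4)} is the additivity of orders: $\ord_P(zt) = \ord_P(z) + \ord_P(t)$ for each valuation $\ord_P$, and summing over $P \in \c_\infty$ and negating gives $d(zt) = d(z) + d(t)$ (with the $-\infty$ conventions handled separately when $z$ or $t$ is zero).

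For the ultrametric inequality \emph{(d3)} and its sharpening \emph{(d3')}, the key input is that each $\ord_P$ is a discrete valuation, so $\ord_P(z + t) \geq \min\{\ord_P(z), \ord_P(t)\}$, with equality when $\ord_P(z) \neq \ord_P(t)$. Negating and summing over $P \in \c_\infty$ turns the $\min$ into a $\max$, which would immediately yield \emph{(d3)} if the inequality held term by term in a way compatible with the sum; here is where I expect the main subtlety. The naive termwise argument does not directly give $d(z+t) \le \max\{d(z), d(t)\}$, because $d$ is a \emph{sum} of local orders rather than a single valuation, so one must argue more carefully. The cleanest route is to use the hypothesis that the points at infinity are all conjugate: by Lemma \ref{kring} and its proof, every rational divisor supported at infinity is an integer multiple of $\sum_{P \in \c_\infty} P$, and the Galois action permutes the $P \in \c_\infty$ transitively, so $\ord_P(z)$ is the same for all $P \in \c_\infty$ (for $z \in R$, since $z$ is $k$-rational its divisor is $k$-rational). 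Consequently $d(z) = -|\c_\infty|\,\ord_{P_0}(z)$ for any fixed $P_0 \in \c_\infty$, i.e. $d$ is, up to the positive constant $|\c_\infty|$, just the negative of a single valuation $\ord_{P_0}$. This reduces \emph{(d3)} and \emph{(d3')} to the standard ultrametric properties of the discrete valuation $-\ord_{P_0}$, and the strict-equality case \emph{(d3')} follows from the equality clause of the valuation inequality. The main obstacle is therefore recognizing that the conjugacy hypothesis collapses the sum defining $d$ into a single valuation; once that reduction is made, all five properties are routine consequences of the axioms for discrete valuations together with the degree-zero principle for principal divisors.
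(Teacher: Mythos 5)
Your proposal is correct and takes essentially the same route as the paper's proof: the conjugacy hypothesis forces $\ord_P(z)$ to be independent of $P \in \c_\infty$ (since $z$ is defined over $k$ and the Galois group permutes the points at infinity transitively), so $d = -m\,v$ for a single discrete valuation $v$ and $m = |\c_\infty|$, after which (d1), (d3), (d3'), (d4) are standard valuation facts and the values lie in $\mathbb{N} \cup \{-\infty\}$ by $\deg({\rm div}(z)) = 0$. The only adjustment you should make is one of ordering: establish the collapse $d = -m\,v$ at the outset, since your step in (d2) asserting that $d(z) = 0$ implies $z$ has no poles at infinity already depends on it (a priori a sum of orders at infinity could vanish by cancellation).
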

\begin{proof}
Since the points at infinity are non-rational and conjugate to each other, the valuation $v = \ord_P$ on $k(\c)$ does not depend on the choice of $P \in \c_\infty$: if $\c_{\infty}=\{P_1,\dots,P_m\}$, then $\ord_{P_1}(F)=\dots=\ord_{P_m}(F)$ for any $F\in k(\c)$. Then the map $d$ on $R$ coincides with $- m v$, where $m = | \c_\infty|$. It is straightforward to show that $d = - m v$ satisfies properties (d1) and (d4). Indeed, the properties (d3) and (d3') hold since $d$ is the opposite of a valuation, (d2) holds since $R^*=k^*$ by Lemma \ref{kring}. To conclude we remark that the map $d$ actually takes values in $\mathbb{N} \cup \{ - \infty \}$. Any element $z\in k[\c_0] \setminus \{0\} $ has no poles outside $\c_\infty$, therefore ${\rm ord}_P(z)\geq 0$ for all $P\in \c_0$. Then, from Proposition \ref{gradodivisore}, it follows that $d(z)= -\sum_{P\in \c_{\infty}}\ord_P(z)=\sum_{P\in \c_0}\ord_P(z)\geq 0$ for any non-zero $z\in k[\c_0]$.
\end{proof}

\begin{Remark}
It is worth noting that Lemma \ref{kring} is nothing but one of the two implications in \cite[Lemma 3.4]{CZZ}; here we do not need to assume that $R$ is a PID. Moreover Lemma \ref{dfunction} can be obtained as a consequence of the above Lemma \ref{kring} and of Lemma 3.3 in \cite{CZZ}. We have given a direct proof for the sake of completeness.
\end{Remark}

\medskip

In accordance with the terminology in \cite{CZZ} and following Cohn \cite{Cohn} we say that a ring $R$ containing the field $k$ as a subring is a $k$-ring if $R^*= k$. Moreover if $R$ is a ring containing $k$, a map $\delta : R \to \mathbb{N} \cup \{ - \infty \}$ satisfying properties (d1)--(d4) is called a {\it pseudo-valuation}. We observe that (d2) and (d4) imply that $R^* = k^*$, hence $R$ is a $k$-ring; moreover (d1) and (d4) show that $R$ cannot contain non-trivial zero-divisors, hence it is an integral domain.

With this notation, from Lemmas \ref{kring} and \ref{dfunction}, we get that the affine coordinate ring of a smooth curve $\c$ over the perfect field $k$ having all the points at infinity non-rational over $k$ and conjugate by elements of the Galois group $G_{\bar{k}/k}$ is a $k$-ring with pseudo-valuation $d$, defined by (\ref{d(z)}).

\medskip

Two elements $a,b$ of a $k$-ring $R$ with pseudo-valuation $\delta$ are said to be \textit{$R$-independent} if for any non-zero $c\in R$ we have
\[\delta(a + bc) \ge \delta(a), \quad \delta(b + ac) \ge \delta(b).\]

We say that a pair of elements $a, b \in R$ form a {\it regular row} if $(a \ b)$ is the first row of a suitable $2 \times 2$ invertible matrix with entries in $R$.

Let us recall the following proposition due to P.M. Cohn.

\begin{Proposition}[Proposition 7.3 of \cite{Cohn}]\label{Cohn} 
Let $R$ be a $k$-ring with a pseudo-valuation $\delta$ that satisfies the property (GE$_2$), and let $a$, $b$ be elements of $R$ such that $\delta(a) = \delta(b)$. If $a, b$ form a regular row, then they cannot be $R$-independent.
\end{Proposition}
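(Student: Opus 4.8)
The plan is to reduce the statement to one about leading terms in an associated graded ring, and then to exploit the hypothesis $(GE_2)$ through the structure of $2\times2$ products of elementary matrices. First I would dispose of the degenerate cases. If one of $a,b$ is zero, regularity of $(a\ b)$ forces the other to be a unit, hence of $\delta$-value $0$, contradicting $\delta(a)=\delta(b)$; so $a,b$ are both nonzero. If $\delta(a)=\delta(b)=0$, then $a,b\in k^*$ by (d2), and choosing $c=-ab^{-1}\in k^*$ gives $a+bc=0$, so $\delta(a+bc)=-\infty<\delta(a)$ and $a,b$ are \emph{not} $R$-independent. Thus we may assume $n:=\delta(a)=\delta(b)\ge 1$.

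The conceptual core is a translation of $R$-independence into linear independence of leading terms. Because (d4) is an \emph{exact} additivity, the filtration $R_{\le j}=\{z:\delta(z)\le j\}$ (with $R_{\le 0}=k$) gives an $\mathbb N$-graded ring $\mathrm{gr}(R)=\bigoplus_j R_{\le j}/R_{\le j-1}$ that is an integral domain, and the leading-term map $L(z)\in R_{\le\delta(z)}/R_{\le\delta(z)-1}$ is multiplicative: $L(zt)=L(z)L(t)$. Using (d3)--(d3') one checks that, for $\delta(a)=\delta(b)=n$, a relation $\delta(a+bc)<n$ with $c\ne0$ can occur only when $\delta(c)=0$, i.e. $c\in k^*$, and is then equivalent to $L(a)=-cL(b)$; the symmetric condition behaves the same way. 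Hence $a,b$ are $R$-independent \emph{if and only if} $L(a)$ and $L(b)$ are $k$-linearly independent in the degree-$n$ component $\mathrm{gr}(R)_n$. So it suffices to prove that, for a regular row with $\delta(a)=\delta(b)=n\ge1$, the leading terms $L(a),L(b)$ are proportional over $k$.

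Now I would bring in $(GE_2)$. Since $(a\ b)$ is a regular row, it is the first row of some $M\in GL_2(R)$, and $(GE_2)$ lets us write $M=D\,E(x_1)\cdots E(x_r)$, with $D$ a diagonal unit matrix and $E(x)=\left(\begin{smallmatrix}x&1\\-1&0\end{smallmatrix}\right)$ Cohn's generators. The first row of $E(x_1)\cdots E(x_j)$ obeys the recurrence $(p,q)\mapsto(px_j-q,\ p)$, and left multiplication by $D$ merely rescales the first row by a unit, so $\delta$ and the proportionality class of the leading terms are unchanged. A direct induction shows that if every $x_i$ has $\delta(x_i)\ge1$, then $\delta(p_j)=\delta(x_1)+\cdots+\delta(x_j)$ is \emph{strictly increasing} and $\delta(p_j)>\delta(q_j)=\delta(p_{j-1})$; in particular the first row would satisfy $\delta(a)>\delta(b)$, contradicting $\delta(a)=\delta(b)$. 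Therefore some $x_i$ must be a unit of $\delta$-value $0$. After using the reduction of \cite{Cohn} to absorb interior degree-$0$ factors into the diagonal (so that all interior $x_i$ have $\delta(x_i)\ge1$), the only way to retain $\delta(p_r)=\delta(q_r)$ is that the terminal factor $x_r\in k^*$; the recurrence then yields $L(p_r)=x_r\,L(p_{r-1})$ and $L(q_r)=L(p_{r-1})$, so $L(a)$ and $L(b)$ are proportional over $k$. By the criterion above, $a,b$ are not $R$-independent, which is the assertion.

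The main obstacle is this last step: the honest content is Cohn's structural analysis of $GE_2$ (the reduction to a standard form in which the interior generators have positive $\delta$-value), together with the boundary bookkeeping needed to show that equal degrees of $a$ and $b$ can arise \emph{only} from a degree-$0$ terminal factor. The graded reformulation and the degree recurrence are elementary consequences of (d1)--(d4), but pinning down exactly which degenerate words produce equal-degree first rows --- and checking that each such word forces proportional leading terms --- is where one must import and re-examine the reduction theory of \cite{Cohn}.
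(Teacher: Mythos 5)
The paper offers no proof of this proposition at all --- it is quoted directly from Cohn (\cite[Proposition 7.3]{Cohn}) --- and your argument is essentially a reconstruction of Cohn's original one: reduce a $GE_2$-word to standard form $D\,E(x_1)\cdots E(x_r)$ with interior factors of positive $\delta$-value, run the degree induction on the continuant recurrence $(p,q)\mapsto (px-q,\,p)$ to see that $\delta(a)=\delta(b)$ forces a terminal factor $x_r\in k^*$, and conclude that the leading terms of $a,b$ are $k$-proportional, which defeats $R$-independence. Your handling of the degenerate cases and the translation of $R$-independence into $k$-linear independence of leading terms via (d3)--(d3') are correct, and the one black box you import --- Cohn's reduction to standard form for $k$-rings with a pseudo-valuation (his Theorem 7.1) --- is exactly the ingredient the cited proof rests on as well, so this matches the source's approach rather than diverging from it.
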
 

We are now able to prove the next theorem.

\begin{Theorem}\label{main3}
Let $\c\subset\P^2$ be a plane smooth curve over the field $k$, having degree $\geq 2$, such that the points at infinity are non-rational and all conjugate to each other by elements of the Galois group $G_{\bar{k}/k}$. Then $R=k[\c_0]=k[\c\setminus \c_\infty]$ does not satisfy property (GE$_2$). 
\end{Theorem}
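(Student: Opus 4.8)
The plan is to argue by contradiction, exhibiting a witness to which Proposition \ref{Cohn} applies. By Lemmas \ref{kring} and \ref{dfunction}, the ring $R=k[\c_0]$ is a $k$-ring carrying the pseudo-valuation $d$ defined in (\ref{d(z)}). Hence it suffices to produce elements $a,b\in R$ that simultaneously (i) form a regular row, (ii) satisfy $d(a)=d(b)$, and (iii) are $R$-independent. Indeed, if $R$ satisfied property {\GE2}, then Proposition \ref{Cohn} applied to such $a,b$ would force them to be $R$-dependent, a contradiction; so $R$ cannot satisfy {\GE2}.

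The candidates I would use are the affine coordinates. Let $x,y\in R$ be the images of $X_1/X_0$ and $X_2/X_0$, and set $n=\deg\c\ge 2$. Regarded as a morphism $\c\to\P^1$, the function $x=X_1/X_0$ is the projection of $\c$ from the point $[0:0:1]$; this point is $k$-rational, so by the hypothesis that every point of $\c_\infty$ is non-rational it does not lie on $\c$. Consequently the projection has degree $n$, its fibre over $\infty=[1:0]$ is precisely the pole divisor of $x$, and that fibre is supported on $\c\cap\{X_0=0\}=\c_\infty$ with total multiplicity $n$; therefore $d(x)=-\sum_{P\in\c_\infty}\ord_P(x)=n$. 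The same argument with the centre $[0:1:0]$ gives $d(y)=n$.

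To secure (i) and (ii) I would translate: choose a $k$-rational point $(\alpha,\beta)$ of $\A^2$ not lying on $\c_0$ --- such a point exists, since $\c_0$ is cut out by a single nonconstant polynomial that cannot vanish identically on $\A^2(k)$ (transparent when $k$ is infinite, the case of interest here) --- and set $a=x-\alpha$, $b=y-\beta$. Since a nonzero constant has $d$-value $0<n$, property (d3$'$) gives $d(a)=d(x)=n=d(y)=d(b)$, so (ii) holds. Moreover $a$ and $b$ can only vanish simultaneously at $(\alpha,\beta)$, which is off the curve; hence they have no common zero, so $(a,b)=R$ in the Dedekind domain $R$. Writing $sa+tb=1$, the matrix $\bigl(\begin{smallmatrix} a & b\\ -t & s\end{smallmatrix}\bigr)$ is invertible, so $a,b$ form a regular row and (i) holds.

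The crux is (iii). If $c\in R\setminus\{0\}$ has $d(c)\ge 1$, then by (d4) $d(bc)=d(b)+d(c)>n=d(a)$, so (d3$'$) gives $d(a+bc)=d(bc)>d(a)$, and symmetrically $d(b+ac)>d(b)$; thus $R$-independence reduces to the matching-degree case $c=\mu\in k^*$, and (replacing $\mu$ by $\mu^{-1}$) to the single inequality $d(a+\mu b)\ge n$. Here $a+\mu b=(x+\mu y)-(\alpha+\mu\beta)$, and $x+\mu y=(X_1+\mu X_2)/X_0$ is the projection of $\c$ from $[0:-\mu:1]$, again a $k$-rational point and hence off $\c$; so, exactly as for $x$, its pole divisor is supported on $\c_\infty$ with total degree $n$, giving $d(x+\mu y)=n$. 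Subtracting the constant $\alpha+\mu\beta$, of $d$-value $<n$, leaves this unchanged, so $d(a+\mu b)=n$ and (iii) follows. I expect this final step to be the real obstacle, and it is precisely where the hypothesis is spent: the non-rationality of $\c_\infty$ forces every projection centre $[0:-\mu:1]$ off the curve, so that no $k$-linear combination $x+\mu y$ drops pole order at infinity --- there is no cancellation of leading terms. With (i)--(iii) established, Proposition \ref{Cohn} delivers the contradiction.
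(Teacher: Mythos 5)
Your proposal is correct and follows essentially the same route as the paper: both establish via Lemmas \ref{kring} and \ref{dfunction} that $R$ is a $k$-ring with pseudo-valuation $d$, take the (translated) coordinate functions as a regular row with equal $d$-values, verify $R$-independence by splitting into the cases $d(z)>0$ and $z\in k^*$, and conclude by Proposition \ref{Cohn}. The only difference is cosmetic: where the paper normalizes $F(0,0)\neq 0$ and says the key facts $d(x)=d(y)$ and $d(x+\mu y)=d(x)$ are ``easily seen'' from the leading form $F_n(X,Y)=c\prod_i(Y-\alpha_i X)$ with $\alpha_i\notin k$, you make them explicit by computing pole divisors as fibres of projections from $k$-rational centres forced off $\c$ by the non-rationality of $\c_\infty$ --- the same use of the hypothesis, spelled out geometrically.
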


\begin{proof} 
Since all the elements of $\c_\infty$ are non-rational and conjugate to each other, then from Lemma \ref{kring}, $R$ is a $k$-ring. 

The remainder of the argument is analogous to the proof of Theorem 3.5 in \cite{CZZ}.

Let $F \in k[X, Y]$ be a polynomial of degree $n \ge 2$ and $F(X, Y)= 0$ be the defining equation of $\c_0$. We can assume, without loss of generality, that $F(0, 0) \neq 0$. Let $F_n(X, Y)$ be  the homogeneous component of $F$ of degree $n$. Since the points at infinity are conjugate and not rational, it follows that $F_n(X, Y) = c \prod_{i = 1}^n(Y - \a_i X)$, where $c \in k$, $\a_i \in \bar{k} \setminus k$, and $P_i = (1, \a_i, 0)$ are the points at infinity,  $1 \le i \le n$. Since the $P_i$'s are conjugate by elements of $G_{\bar{k}/k}$, from Lemma \ref{dfunction} we get that the map $d = - \sum_{i = 1}^n \ord_{P_i}$ satisfies properties (d1)-(d4). 

We now consider the elements $x, y$ of the coordinate ring $R$. Since $F(0, 0) \neq 0$, it is easily seen that they can occur as a first row of an invertible matrix in $M_2(R)$ so they form a regular row. Considering homogeneous coordinates, it is also straightforward to verify that $d(x) = d(y)$. It remains to verify that $x, y$ are $R$-independent. Take any nonzero $z \in R$. If $z \notin k^*$, than $d(z) > 0$ and so $d(x + yz) > d(x), d(y + xz) > d(y)$ by the properties of $d$. If $z \in k^*$, it is easy to see that $d(x + yz) = d(x) = d(y) = d(y + xz)$. Now we can apply Proposition \ref{Cohn}, and conclude that $R$ does not satisfy property (GE$_2$).   
\end{proof}

Therefore, given a plane smooth curve $\c$ of degree $\geq 2$ having non-rational conjugate points at infinity, whenever its coordinate ring $R$ is not a principal ideal domain, then $R$ is a Dedekind domain (so a Noetherian Pr\"ufer domain) that satisfies (C').
 
\begin{example}
Let us consider the coordinate ring $R=\R[\c_0]$ of the affine smooth curve $\c_0$ over $\R$ having defining equation $x^4+y^4+1=0$. Then $R$ is a non-UFD Dedekind domain. This can be seen by observing that 
\[(x^2+y^2-1)(x^2+y^2+1)=2(xy-1)(xy+1)\]
is a non-unique factorization into indecomposable factors. Moreover $R$ does not satisfy property {\GE2} by Theorem \ref{main3}. 
\end{example}

\subsection{$\text{Int}(\Z)$ and the property \ID2.} 

A natural example of Pr\"ufer domain that fails to be a B\'ezout domain is the celebrated ring of integer-valued polynomials $\text{Int}(\Z)$. In this section we prove that this ring does not satisfy property {\GE2} thus verifying the conjecture (C) in its equivalent formulation (C').

The ring $\text{Int}(\Z)$ of integer-valued polynomials is defined as the set of rational polynomials taking integral values on integers:
$$
{\rm Int}(\Z)=\{f\in\Q[X]\,|\,f(\Z)\subseteq \Z\}.
$$

It is clear that $\text{Int}(\Z)$ is a $\Z$-module such that $\Z[X]\subseteq \text{Int}(\Z)\subseteq \Q[X]$.

Since the 1970s, integer-valued polynomials and their various generalizations have been deeply studied, revealing many interesting features. A main reference for this topic is the volume by Cahen and Chabert \cite{Integer-valued}; see also \cite{Chabert}. For our purposes it is enough to recall the following two results.

\begin{Proposition} 
The $\Z$-module ${\rm Int}(\Z)$ is free.
The polynomials
\[{X\choose n}=\frac{X(X-1)\cdots(X-n+1)}{n!},\]
with the convention ${X\choose 0}=1$ and ${X\choose 1}=X$, form a basis of ${\rm Int}(\Z)$.
\end{Proposition}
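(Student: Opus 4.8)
The plan is to exhibit the binomial polynomials as an explicit basis, using the finite difference operator to convert the integer-valued condition into an integrality condition on coordinates. First I would record the two basic facts that make everything work. The polynomial ${X\choose n}$ has degree $n$ with leading coefficient $1/n!$; hence the family $\{{X\choose n}\}_{n\ge 0}$ is a $\Q$-basis of $\Q[X]$, and every $f\in\Q[X]$ admits a unique expansion $f=\sum_{n=0}^{\deg f} c_n {X\choose n}$ with $c_n\in\Q$. Moreover each ${X\choose n}$ already lies in ${\rm Int}(\Z)$: evaluated at any integer $m$, the numerator $m(m-1)\cdots(m-n+1)$ is a product of $n$ consecutive integers, which is always divisible by $n!$.

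The key tool is the forward difference operator $\Delta$ defined by $(\Delta f)(X)=f(X+1)-f(X)$. Its crucial property is the Pascal identity $\Delta {X\choose n}={X\choose n-1}$ for $n\ge 1$, which I would verify by a one-line computation. Applying $\Delta$ repeatedly to the expansion of $f$ and evaluating at $0$ — where ${0\choose j}=0$ for $j\ge 1$ while ${0\choose 0}=1$ — yields the closed formula $c_n=(\Delta^n f)(0)$ for the coordinates.

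With these in hand the proof closes quickly. If all $c_n\in\Z$, then $f$ is a $\Z$-linear combination of the integer-valued polynomials ${X\choose n}$, so $f\in{\rm Int}(\Z)$. Conversely, observe that $\Delta$ maps ${\rm Int}(\Z)$ into itself, since $(\Delta g)(m)=g(m+1)-g(m)\in\Z$ whenever $g$ is integer-valued; iterating, $\Delta^n f\in{\rm Int}(\Z)$, and therefore $c_n=(\Delta^n f)(0)\in\Z$ for every $n$. Thus $f\in{\rm Int}(\Z)$ if and only if all its coordinates $c_n$ lie in $\Z$, which says precisely that $\{{X\choose n}\}_{n\ge 0}$ is a $\Z$-basis of ${\rm Int}(\Z)$; in particular ${\rm Int}(\Z)$ is a free $\Z$-module.

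The main obstacle is conceptual rather than computational: since ${\rm Int}(\Z)$ has infinite rank, freeness does not follow from torsion-freeness (as the example of $\Q$ over $\Z$ shows), so one genuinely must produce an explicit basis. The role of the difference operator is exactly to supply that basis together with a finite, constructive formula for the coordinates, and recognizing $\Delta$ as the right device — in particular that it both lowers the binomial index and preserves integer-valuedness — is the one non-routine ingredient.
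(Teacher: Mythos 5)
Your proof is correct: the binomials are integer-valued and $\Q$-linearly independent, $\Delta$ both lowers the binomial index and preserves ${\rm Int}(\Z)$, and the coordinate formula $c_n=(\Delta^n f)(0)$ then shows $f\in{\rm Int}(\Z)$ exactly when all $c_n\in\Z$. The paper does not prove the statement itself but cites \cite[Prop.\ I.1.1]{Integer-valued}, and the argument there is precisely this finite-difference (Gregory--Newton) argument, so your approach coincides with the one the paper relies on.
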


\begin{proof}
See \cite[Prop. I.1.1.]{Integer-valued}.
\end{proof}

\begin{Proposition}
The ring ${\rm Int}(\Z)$ of integer-valued polynomials is a Pr\"ufer domain. Moreover, it is not a B\'ezout domain.
\end{Proposition}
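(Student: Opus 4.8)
The final statement asserts two facts about $\text{Int}(\Z)$: that it is a Pr\"ufer domain, and that it is not a B\'ezout domain. The plan is to prove these separately, relying on the known structure theory of integer-valued polynomials and on explicit computation with binomial-coefficient polynomials.

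For the Pr\"ufer property, I would argue locally. The standard approach is to recall that for each prime $p$ the localization $\text{Int}(\Z)_{(p)}$ (more precisely, the localizations at the maximal ideals lying over $p$) can be described in terms of $p$-adic completions, and that each such localization is a valuation domain; since a domain all of whose localizations at maximal ideals are valuation domains is Pr\"ufer, the claim follows. Alternatively, and more self-containedly, I would invoke Gilmer's criterion (already cited in the excerpt just before Theorem \ref{ID2_implica_Prufer}): it suffices to show that every two-generated ideal of $\text{Int}(\Z)$ is invertible. Either route reduces the Pr\"ufer assertion to a local computation; because this is a textbook fact, I would simply cite the Cahen--Chabert volume \cite{Integer-valued} rather than reconstruct the valuation-theoretic argument in full.

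For the failure of the B\'ezout property, the cleanest route is to exhibit a finitely generated ideal that is not principal. The canonical witness is the ideal $I = (p, \binom{X}{p})$ for a prime $p$ (for instance $p = 2$, giving $I = (2, \binom{X}{2})$ with $\binom{X}{2} = X(X-1)/2$). One checks that $I$ contains no unit and that no single element of $\text{Int}(\Z)$ generates it: if $I = (g)$ then $g$ would divide both $p$ and $\binom{X}{p}$ in $\text{Int}(\Z)$, and analysing degrees together with the values taken at the integers $0, 1, \dots, p-1$ forces a contradiction. The arithmetic here is elementary but must be carried out carefully, keeping track of which rational numbers are legitimate coefficients of integer-valued polynomials.

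The main obstacle is the non-principality computation for $I$: the subtlety is that divisibility in $\text{Int}(\Z)$ is \emph{not} the naive divisibility in $\Q[X]$, since the ring sits strictly between $\Z[X]$ and $\Q[X]$, and one must use the integer-valued condition at each integer point to rule out a principal generator. Once the Pr\"ufer property is granted, this single explicit example suffices, since a Pr\"ufer domain is B\'ezout precisely when every finitely generated ideal is principal. As with the Pr\"ufer assertion, since both facts are standard I would ultimately cite \cite{Integer-valued} (see in particular the sections on the ideal structure of $\text{Int}(\Z)$) and present only the explicit non-principal ideal to make the non-B\'ezout claim transparent.
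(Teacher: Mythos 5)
Your proposal is correct, and for the Pr\"ufer assertion it coincides with the paper's proof, which is nothing more than a citation: the authors derive both claims at once from \cite[Prop.\,6.3]{Chabert}, or equivalently \cite[Th.\,VI.1.7]{Integer-valued}, with no further argument. Where you genuinely diverge is the non-B\'ezout half: instead of leaving it to the reference, you propose the explicit witness $I=(2,\binom{X}{2})$, and this works, in fact more easily than you anticipate. If $I=(g)$ then $g$ divides $2$ in ${\rm Int}(\Z)\subset\Q[X]$, so by degree count $g$ is a constant, hence an integer dividing $2$, i.e.\ $g=\pm 1$ or $g=\pm 2$; the case $g=\pm 1$ is excluded because every $f=2a+\binom{X}{2}b\in I$ has $f(0)$ and $f(1)$ even, so $I$ is proper, and the case $g=\pm 2$ is excluded because $\tfrac{1}{2}\binom{X}{2}$ takes the value $\tfrac{3}{2}$ at $X=3$, hence $\binom{X}{2}\notin(2)$. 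So no delicate analysis of values at $0,1,\dots,p-1$ is needed, and the ``naive divisibility in $\Q[X]$'' subtlety you flag collapses, since any divisor of a nonzero constant is forced to be constant. The trade-off between the two routes is clear: the paper's citation is shortest and also delivers the Pr\"ufer property, which genuinely requires the valuation-theoretic machinery (the localizations of ${\rm Int}(\Z)$ at its maximal ideals being valuation domains), whereas your explicit ideal makes the non-B\'ezout claim self-contained and transparent at the cost of a few lines; for the Pr\"ufer direction your sketch is sound but, as you acknowledge, ultimately defers to the same source the paper cites, so there is no real gap, only an optional enrichment of the second assertion.
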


\begin{proof}
The result is an immediate consequence of \cite[Prop.6.3]{Chabert}, or equivalently of \cite[Th.VI.1.7]{Integer-valued}.
\end{proof}

In accordance with Cohn \cite[Sect.8]{Cohn}, we say that a ring $R$ is \textit{discretely ordered} if it is totally ordered and, for any $r\in R$, if $r>0$, then $r\geq 1$. The ring of integers $\Z$ is the most obvious example of a discretely ordered ring.

\begin{Proposition}
The ring of integer-valued polynomials ${\rm Int}(\Z)$ is a discretely ordered ring.
\end{Proposition}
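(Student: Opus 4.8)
The plan is to produce an explicit total order on $\mathrm{Int}(\Z)$ that refines the natural order and verifies the discreteness condition. First I would observe that every nonzero $f \in \mathrm{Int}(\Z) \subseteq \Q[X]$ is, as a rational polynomial, either eventually positive or eventually negative as $X \to +\infty$ along the integers, since a nonzero polynomial has only finitely many real roots and hence a well-defined sign for all sufficiently large integer arguments. This suggests defining $f > 0$ precisely when $f(n) > 0$ for all sufficiently large $n \in \Z$ (equivalently, when the leading coefficient of $f$, viewed as a polynomial in $X$, is positive). I would then declare $f > g$ iff $f - g > 0$.

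Next I would check that this relation is a total order compatible with the ring operations. Totality follows from the sign dichotomy above: for $f \neq g$, the difference $f-g$ is a nonzero polynomial, so it is eventually of constant sign, forcing exactly one of $f>g$ or $g>f$. Transitivity and compatibility with addition are immediate from the ``eventually positive'' description. For multiplication I would note that if $f$ and $g$ are both eventually positive, then $fg$ is eventually positive as well (the product of two eventually-positive functions on the integers is eventually positive), which gives closure of the positive cone under products; this is the standard verification that the order is a ring order.

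The key point, and the one demanding the most care, is the discreteness condition: if $f > 0$ then $f \geq 1$, i.e.\ $f - 1 \geq 0$. This is where the specific structure of $\mathrm{Int}(\Z)$ enters, as opposed to all of $\Q[X]$ (where $1/2 > 0$ but $1/2 < 1$, so $\Q[X]$ is \emph{not} discretely ordered). Given $f > 0$ in $\mathrm{Int}(\Z)$, for all large $n$ we have $f(n) \geq 1$ because $f(n)$ is a positive integer (here I use that $f$ is integer-valued). Hence $(f-1)(n) = f(n) - 1 \geq 0$ for all large $n$, so $f - 1$ is either eventually positive or identically zero on a tail, which by the polynomial sign argument means $f - 1 > 0$ or $f - 1 = 0$; in either case $f \geq 1$. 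Thus the integrality of the values is exactly what upgrades the strict inequality to a gap of size at least one.

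I would expect the main obstacle to be purely a matter of stating the sign dichotomy cleanly and making sure the definition of the positive cone is phrased so that the ring-order axioms and the discreteness step all read off transparently; there is no deep difficulty, only the need to invoke integer-valuedness at precisely the right moment (the discreteness step) rather than earlier. A clean alternative would be to define the order directly via leading coefficients: write $f = \sum_i a_i \binom{X}{i}$ in the binomial basis and compare $f,g$ by the sign of the leading coefficient of $f-g$ as an ordinary polynomial in $X$; this makes totality and the ring-order axioms formal, and the discreteness condition again reduces to the observation that a positive integer-valued polynomial takes values $\geq 1$ eventually.
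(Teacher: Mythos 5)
Your proof is correct and takes essentially the same route as the paper: the paper orders $\mathrm{Int}(\Z)$ by declaring $f>0$ exactly when the leading coefficient $a_n$ in the binomial expansion $f=\sum_i a_i\binom{X}{i}$ is positive, which is precisely the order you construct (your ``eventually positive'' cone coincides with it, and you even give the binomial-basis formulation yourself as the clean alternative). The only difference is cosmetic: you certify the discreteness condition $f>0\Rightarrow f\geq 1$ via the integrality of the values $f(n)$ for large $n$, whereas the paper reads it off from the integrality of the coefficients $a_i$ --- two phrasings of the same fact, with your version actually spelling out the verification that the paper leaves as ``clear from the definition.''
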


\begin{proof}
Let $f=a_0+a_1X+\dots+a_n{X\choose n}$ ($a_i \in \Z$) be any element of $\text{Int}(\Z)$. We will say that $f>0$ if and only if $a_n>0$. Then, given any $f,g\in\text{Int}(\Z)$, we have $f>g$ if and only if $f-g>0$. Moreover, if $f>0$, then it is clear from the definition of the order relation that it must be $f\geq 1$.
\end{proof}

We now summarize some results from \cite[Sect.8]{Cohn} that relate discretely ordered rings and the property {\ID2}.
As usual, the group of invertible $2 \times 2$ matrices over $R$ is denoted by $GL_2(R)$.

\begin{Theorem}[cf. Theorem 8.2 of \cite{Cohn}]\label{uniqueform}
Let $R$ be a discretely ordered ring. Then any $\mathbf{M} \in GL_2(R)$ that is a product of elementary matrices can be uniquely written in the form 
\begin{equation}
\mathbf{M}=\begin{pmatrix}
\alpha & 0\\
0 & \beta
\end{pmatrix}\mathbf{T}(r_1)\cdots\mathbf{T}(r_k),
\end{equation}
where $\alpha,\beta\in R^*$,
\[\mathbf{T}(r_i)=\begin{pmatrix}
r_i & 1\\
1 & 0
\end{pmatrix},\]
and the $r_i\in R$ satisfy
\[r_1\geq 0,\quad r_i>0\quad\text{for }1<i<k,\]
and when $k=2$, $r_1,r_2$ are not both zero.
\end{Theorem}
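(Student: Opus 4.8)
The plan is to prove the two halves, existence and uniqueness, after first reducing everything to words in the matrices $\mathbf{T}(r)$. The basic observations are that the elementary matrices factor as $\begin{pmatrix}1&a\\0&1\end{pmatrix}=\mathbf{T}(a)\mathbf{T}(0)$ and $\begin{pmatrix}1&0\\a&1\end{pmatrix}=\mathbf{T}(0)\mathbf{T}(a)$, that $\mathbf{T}(0)^2=I$, and that the three identities
$$\mathbf{T}(a)\mathbf{T}(0)\mathbf{T}(b)=\mathbf{T}(a+b),\qquad \begin{pmatrix}\alpha&0\\0&\beta\end{pmatrix}\mathbf{T}(r)=\mathbf{T}(\alpha r\beta^{-1})\begin{pmatrix}\beta&0\\0&\alpha\end{pmatrix},\qquad \mathbf{T}(-r)=\begin{pmatrix}-1&0\\0&1\end{pmatrix}\mathbf{T}(r)\begin{pmatrix}1&0\\0&-1\end{pmatrix}$$
hold by direct computation. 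I would also record the elementary but useful fact that in a discretely ordered ring $R^*=\{1,-1\}$: a positive unit $u$ satisfies $u\ge 1$ and $u^{-1}\ge 1$, whence $u=uu^{-1}\ge u\ge1$ forces $u=1$. Thus the diagonal factor $\begin{pmatrix}\alpha&0\\0&\beta\end{pmatrix}$ ranges over only four matrices, which keeps the sign bookkeeping finite.

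For existence, I would write a given product of elementary matrices as a word $\mathbf{T}(t_1)\cdots\mathbf{T}(t_N)$. Using the first identity I delete every interior occurrence of $\mathbf{T}(0)$, strictly shortening the word; using the second and third identities I push all sign matrices to the far left, collecting a single diagonal $\begin{pmatrix}\alpha&0\\0&\beta\end{pmatrix}$ and replacing each surviving $t_i$ by $\pm t_i$, which I choose so that the interior entries become positive, hence $\ge1$ by discreteness. Since $r_1\ge0$ and the final entry is unconstrained, leading and trailing zeros are permitted, so this reduction terminates in the asserted form; the degenerate short cases ($k\le2$, and the relation $\mathbf{T}(0)^2=I$) are dispatched by hand to meet the stated side conditions.

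The heart of the argument is uniqueness, and here I would exploit the continuant structure. Writing $P=\mathbf{T}(r_1)\cdots\mathbf{T}(r_k)=\begin{pmatrix}a_k&a_{k-1}\\c_k&c_{k-1}\end{pmatrix}$, the entries obey the recurrences $a_k=r_ka_{k-1}+a_{k-2}$ and $c_k=r_kc_{k-1}+c_{k-2}$ with $a_1=r_1$, $a_0=1$, $c_1=1$, $c_0=0$. First I would prove, by induction on $k$ using $r_1\ge0$ and $r_i\ge1$ for $1<i<k$, the order inequalities $0\le a_{k-2}<a_{k-1}$ and $0\le c_{k-2}<c_{k-1}$ (with the boundary cases recorded separately, since $r_1$ may vanish and $r_k$ is unconstrained). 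Because $\begin{pmatrix}\alpha&0\\0&\beta\end{pmatrix}$ only scales the top row by $\alpha$ and the bottom row by $\beta$ uniformly, the partial quotient $r_k$ satisfies $\alpha a_k=r_k(\alpha a_{k-1})+\alpha a_{k-2}$, so it is recoverable from the top row of $\mathbf{M}$ alone, independently of $\alpha$. The decisive point is then a purely order-theoretic uniqueness of quotients: if $a=qb+s=q'b+s'$ with $b>0$ and $0\le s,s'<b$, then $(q-q')b=s'-s$ has absolute value $<b$, while discreteness forces $|q-q'|\ge1$ unless $q=q'$; hence $q=q'$. Applying this to the recovered remainders peels off $\mathbf{T}(r_k)$ uniquely and lowers $k$ by one, so induction yields equality of all the $r_i$ and of $k$. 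Finally $\alpha$ and $\beta$ are read off as $\alpha=\mathbf{M}_{11}/a_k$ and $\beta=\mathbf{M}_{21}/c_k$ once $P$ is determined, completing uniqueness.

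I expect the main obstacle to be the inductive establishment of the correct order inequalities at the two ends of the word: the leftmost partial quotient may be zero and the rightmost is sign-unconstrained, so the clean continued-fraction inequalities fail at the boundary and must be tracked through the degenerate cases $k=1,2$ by hand. Once those inequalities are secured, discreteness does all the real work, since it is exactly the hypothesis ``$r>0\Rightarrow r\ge1$'' that prevents two distinct partial quotients from producing the same convergent and that pins down $R^*=\{\pm1\}$.
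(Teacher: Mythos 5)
First, a point of comparison: the paper itself gives no proof of Theorem \ref{uniqueform} --- it is imported verbatim from Cohn's Theorem 8.2 of \cite{Cohn} --- so the benchmark is Cohn's Section 8 rather than anything in this paper. Your uniqueness half does reproduce the right machinery (the continuant recurrence $p_k=p_{k-1}r_k+p_{k-2}$, the growth inequality that the paper quotes as Lemma \ref{lemmap}, and uniqueness of quotient-with-remainder forced by discreteness), and the generic peeling step for $k\ge 3$ is sound, since then $0\le a_{k-2}<a_{k-1}$ holds and the sign of $M_{12}=\alpha a_{k-1}$ determines $\alpha$. But note that deferring the boundary cases ``by hand'' postpones the actual content: collisions between short forms are genuinely possible and are excluded only by the side conditions, e.g. $\mathbf{T}(0)\mathbf{T}(1)\mathbf{T}(x)=\begin{pmatrix}-1&0\\0&1\end{pmatrix}\mathbf{T}(-1)\mathbf{T}(x+1)$, where the right-hand expression is ruled out precisely because $r_1=-1<0$. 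A complete uniqueness proof must also compare forms of different lengths $k\neq k'$ with $\min(k,k')\le 2$, and none of that is carried out.

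The genuine gap is in your existence half. The claim that you can replace each surviving $t_i$ by $\pm t_i$ ``chosen so that the interior entries become positive'' is not available: the commutation rule $\begin{pmatrix}\alpha&0\\0&\beta\end{pmatrix}\mathbf{T}(r)=\mathbf{T}(\alpha\beta r)\begin{pmatrix}\beta&0\\0&\alpha\end{pmatrix}$ flips the sign of \emph{every} letter a sign matrix passes, and your identity $\mathbf{T}(-r)=\begin{pmatrix}-1&0\\0&1\end{pmatrix}\mathbf{T}(r)\begin{pmatrix}1&0\\0&-1\end{pmatrix}$ plants sign matrices on \emph{both} sides of the repaired letter, so fixing one interior sign disturbs all the others (and contracting an interior $\mathbf{T}(0)$ merges neighbours into $\mathbf{T}(a+b)$, which can be negative or zero again). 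Termination and correctness of this rewriting is exactly the hard part of Cohn's argument and is simply asserted in your sketch. Worse, the signs \emph{cannot} always be arranged as the side conditions are literally stated: over the discretely ordered ring $\Z$ take $M=\begin{pmatrix}-3&1\\-1&0\end{pmatrix}=\begin{pmatrix}1&4\\0&1\end{pmatrix}\begin{pmatrix}1&0\\-1&1\end{pmatrix}\begin{pmatrix}1&1\\0&1\end{pmatrix}\in SL_2(\Z)$, a product of three elementary matrices. In any expression $M=\begin{pmatrix}\alpha&0\\0&\beta\end{pmatrix}\mathbf{T}(r_1)\cdots\mathbf{T}(r_k)$ the $(2,2)$ entry is $\beta$ for $k=0$ and $k=2$, and $\beta\,p_{k-2}(r_2,\dots,r_{k-1})\neq 0$ for $k\ge 3$ (the interior letters are $\ge 1$, so the continuant is $\ge 1$); since $M_{22}=0$, this forces $k=1$, whence $\alpha=M_{12}=1$ and $r_1=-3<0$, contradicting $r_1\ge 0$. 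So the constraint $r_1\ge 0$ can only be intended for $k\ge 2$ (for $k=1$ the single letter must play the role of the unconstrained last letter), and your reduction, which tries to enforce $r_1\ge 0$ unconditionally, would fail on this very input. A small slip besides: the chain ``$u=uu^{-1}\ge u\ge 1$'' should read $1=uu^{-1}\ge u\ge 1$; the conclusion $R^*=\{1,-1\}$ is nevertheless correct.
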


Let $t_1,t_2,\dots $ be a sequence of (non-commuting) indeterminates and define recursively a sequence of polynomials $p_k$ in the $t_i$'s, as follows: $p_{-1}=0$, $p_0=1$, and
$$
 p_k(t_1, \dots,t_k)=p_{k-1}(t_1,\dots,t_{k-1})t_k+p_{k-2}(t_1,\dots,t_{k-2}).
$$

Then 
\begin{equation} \label{product of T}
\mathbf{T}(r_1)\cdots \mathbf{T}(r_k)=\begin{pmatrix}
p_k(r_1,\dots,r_k) & p_{k-1}(r_1,\dots,r_{k-1})\\
p_{k-1}(r_2,\dots,r_k)& p_{k-2}(r_2,\dots,r_{k-1})
\end{pmatrix}.
\end{equation}
This can be easily seen by induction.

\begin{Lemma}[cf. Lemma 8.3 in \cite{Cohn}]\label{lemmap}
Let $R$ be a discretely ordered ring and $r_1,\dots,r_k\in R$. If $r_1\geq 0$, $r_i>0$, $1< i < k$ with $k\geq 2$, then
\[p_k(r_1,\dots,r_k)> p_{k-1}(r_1,\dots,r_{k-1}).\]
\end{Lemma}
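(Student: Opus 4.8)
The plan is to argue by induction on $k$, driven by the recurrence $p_k(r_1,\dots,r_k)=p_{k-1}(r_1,\dots,r_{k-1})\,r_k+p_{k-2}(r_1,\dots,r_{k-2})$ and by the one feature that characterises a discretely ordered ring: every $r>0$ satisfies $r\ge 1$, equivalently $r-1\ge 0$. Writing $P_j=p_j(r_1,\dots,r_j)$ for brevity (the sequence $r_1,r_2,\dots$ being fixed), the recurrence rearranges into the single identity
\[
P_k-P_{k-1}=P_{k-1}(r_k-1)+P_{k-2}.
\]
Since $r_k>0$ gives $r_k-1\ge 0$, proving $P_k>P_{k-1}$ comes down to controlling the signs of the two preceding continuants $P_{k-1}$ and $P_{k-2}$, and to ensuring that the trailing term $P_{k-2}$ keeps the inequality strict.

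First I would establish, by a parallel induction, the auxiliary positivity statement $P_0=1$, $P_1=r_1\ge 0$, and $P_j\ge 1$ for every $j\ge 2$. The first nontrivial value is $P_2=r_1r_2+1\ge 1$, using $r_1\ge 0$ and $r_2\ge 1$; and for $j\ge 3$ the recurrence yields $P_j=P_{j-1}r_j+P_{j-2}\ge 1\cdot 1+0=1$ from $P_{j-1}\ge 1$, $r_j\ge 1$ and $P_{j-2}\ge 0$. This is exactly the place where discreteness is indispensable: it is the passage from $r_j>0$ to the integral bound $r_j\ge 1$ that allows the products $P_{j-1}r_j$ to be bounded below, something a merely densely ordered ring would not grant.

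With positivity in hand the generic inductive step is immediate: for $k\ge 4$ one has $P_{k-1}\ge 1$ and $P_{k-2}\ge 1$, so the identity gives $P_k-P_{k-1}\ge P_{k-2}\ge 1>0$. The step I expect to be the genuine obstacle is the low-index boundary rather than the large-$k$ step, precisely because strictness is produced by the trailing term $P_{k-2}$, and at the bottom of the recursion $P_1=r_1$ is only guaranteed to be $\ge 0$ and may vanish. Consequently the cases $k=2$ and $k=3$ must be checked directly against the exact sign conditions on the entries---in the base case through $P_2-P_1=r_1(r_2-1)+1\ge 1$, and for $k=3$ by using the hypotheses to rule out the simultaneous vanishing of $r_1$ and of the factor $r_k-1$---and it is this careful bookkeeping at the first few indices, not the recursion itself, that carries the weight of the proof.
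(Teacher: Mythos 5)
The paper itself contains no proof of this lemma: it is imported, with a ``cf.'', from Lemma~8.3 of Cohn \cite{Cohn}, so your attempt can only be measured against Cohn's argument --- and your skeleton is indeed the standard route: rewriting the recurrence as $p_k-p_{k-1}=p_{k-1}(r_k-1)+p_{k-2}$, establishing positivity of the continuants by a parallel induction, and invoking discreteness only through $r>0\Rightarrow r\ge 1$. Your base case $k=2$ (via $p_2-p_1=r_1(r_2-1)+1\ge 1$) and your generic step $k\ge 4$ (where $p_{k-2}\ge 1$ forces strictness) are correct, under the reading of the hypotheses discussed below.

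There is, however, a genuine gap, and it sits exactly where you predicted the weight of the proof would lie. First, a point you should have flagged: throughout you use $r_k>0$, but the printed hypotheses constrain $r_i$ only for $1<i<k$; with $r_k$ unconstrained the statement is false at every $k$ (in $\Z$, $(r_1,r_2)=(1,-1)$ gives $p_2=0<1=p_1$), so your silent strengthening to $1<i\le k$ is surely the intended reading, but it is a strengthening. Second, and decisively: your treatment of $k=3$ asserts that the hypotheses ``rule out the simultaneous vanishing of $r_1$ and of the factor $r_k-1$''. They do not. In $\Z$ take $(r_1,r_2,r_3)=(0,c,1)$ with $c>0$: every hypothesis holds, even in your strengthened form, yet $p_3=(r_1r_2+1)r_3+r_1=1=p_2$; your own identity computes the defect as $p_3-p_2=p_2(r_3-1)+r_1=0$. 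So at $k=3$ the strict inequality genuinely fails, and no bookkeeping at the low indices can close the argument: as printed (in either reading) the statement admits this equality configuration, a defect inherited from a slightly lossy transcription of Cohn's original. Correspondingly, $\mathbf{T}(0)\mathbf{T}(c)\mathbf{T}(1)$ has equal first-row entries, so the strict conclusions of Lemma~\ref{Lemma finale}(i) and of (\ref{sign of b}) also degenerate on such matrices (the specific matrix treated in Theorem~\ref{main4} is unaffected, since the contradiction there survives with non-strict inequalities). Your instinct that $k=3$, not the recursion, carries the weight was exactly right; what is missing is the recognition that this boundary case is a counterexample to the statement as given, not a case that the hypotheses dispose of.
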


Let $\mathbf{M}$ be as in Theorem \ref{uniqueform}. Then by (\ref{product of T}) we get
\begin{equation}\label{M}
\mathbf{M}=\begin{pmatrix}
a & b\\
c & d
\end{pmatrix}=\begin{pmatrix}
\alpha\, p_k(r_1,\dots, r_k) & \alpha\, p_{k-1}(r_1,\dots,r_{k-1})\\
\beta \, p_{k-1}(r_2,\dots, r_k) & \beta\, p_{k-2}(r_2,\dots,r_{k-1})
\end{pmatrix},
\end{equation}
with $\alpha,\beta \in R^*$ and $r_1,\dots,r_k\in R$ such that $r_1\geq 0$ and $r_i>0$, $1<i<k$.
Moreover, from (\ref{M}) and the definition of $p_k$, we also get
\begin{equation}
\begin{split}
\mathbf{M}(\mathbf{T}(r_k))^{-1}&=\begin{pmatrix}
a & b\\
c & d
\end{pmatrix}\begin{pmatrix}
0 & 1\\
1 & -r_{k}
\end{pmatrix}
=\begin{pmatrix}
b & a-br_k \\
d & c-dr_k
\end{pmatrix}\\
&=\begin{pmatrix}
\alpha\,p_{k-1}(r_1,\dots, r_{k-1}) & \alpha\,p_{k-2}(r_1,\dots, r_{k-2})\\
\beta\,p_{k-2}(r_2,\dots, r_{k-1}) & \beta\,p_{k-3}(r_2,\dots, r_{k-2})
\end{pmatrix}
\end{split}
\end{equation}
Therefore, when $k\geq 2$, Lemma \ref{lemmap} implies that $b=\alpha\,p_{k-1}(r_1,\dots, r_{k-1})$ and $a-br_k=\alpha\,p_{k-2}(r_1,\dots, r_{k-2})$ must have the same sign, depending on $\alpha$. Say $b, a-br_k>0$ (otherwise it suffices to replace $\mathbf{M}$ with $-\mathbf{M}$). Thus, from Lemma \ref{lemmap}, 
\begin{equation}\label{sign of b}
b>a-br_k>0.
\end{equation}

\begin{Lemma}[cf. Lemma 8.4 of \cite{Cohn}]\label{Lemma finale} 
Let $R$ be a discretely ordered ring. Take $\mathbf{M} = \begin{pmatrix}
a & b\\
c & d
\end{pmatrix}  \in GL_2(R)$ which is a product elementary matrices. Let us write
\[\mathbf{M}=\begin{pmatrix}
\alpha & 0\\
0 & \beta
\end{pmatrix}\mathbf{T}(r_1)\cdots\mathbf{T}(r_k)\]

as in Theorem \ref{uniqueform}, and assume that $k\geq 2$ and $b > 0$. Then
\begin{enumerate}[(i)]
\item if $r_k>0$, then $a>b>0$;
\item if $r_k=0$, then $b>a>0$;
\item if $r_k=-c<0$, then $b>a>-bc$.
\end{enumerate}
\end{Lemma}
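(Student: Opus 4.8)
The plan is to analyze the three cases via the decomposition in Theorem~\ref{uniqueform} and the sign estimates already established in~(\ref{sign of b}), splitting according to the value of the last parameter $r_k$. In all cases we have $k \geq 2$, $b > 0$, and from~(\ref{M}) the identities
\[
b = \alpha\, p_{k-1}(r_1, \dots, r_{k-1}), \qquad a = \alpha\, p_k(r_1, \dots, r_k).
\]
Since $b > 0$ and $p_{k-1}(r_1,\dots,r_{k-1}) > 0$ by Lemma~\ref{lemmap} (using $r_1 \geq 0$, $r_i > 0$ for $1 < i < k$), the unit $\alpha$ is positive, so in particular $\alpha \geq 1$ because $R$ is discretely ordered. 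The recursion $p_k = p_{k-1} r_k + p_{k-2}$ gives the master relation
\[
a = b\, r_k + \alpha\, p_{k-2}(r_1, \dots, r_{k-2}),
\]
and everything will follow by feeding the hypothesis on $r_k$ into this single formula together with the inequality~(\ref{sign of b}), which I rewrite as $b > a - b r_k = \alpha\, p_{k-2}(r_1,\dots,r_{k-2}) > 0$.

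For case~(i), where $r_k > 0$, I would note $r_k \geq 1$ (discreteness), so from the master relation $a = b r_k + \alpha p_{k-2} \geq b + \alpha p_{k-2} > b$, since the correction term $\alpha p_{k-2}$ is positive by~(\ref{sign of b}); this yields $a > b > 0$. Case~(ii), where $r_k = 0$, is immediate: the master relation collapses to $a = \alpha p_{k-2}(r_1,\dots,r_{k-2})$, which is exactly the quantity shown positive and strictly smaller than $b$ in~(\ref{sign of b}), giving $b > a > 0$. These two cases are essentially bookkeeping on the established inequalities.

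For case~(iii), where $r_k = -c < 0$, the master relation becomes $a = -bc + \alpha p_{k-2}(r_1,\dots,r_{k-2})$, so $a + bc = \alpha p_{k-2}(r_1,\dots,r_{k-2})$, which is precisely $a - b r_k$, and~(\ref{sign of b}) directly yields $b > a + bc > 0$, i.e.\ $b > a > -bc$. The main subtlety I anticipate is the sign argument for $c$: we need $r_k = -c$ to be genuinely negative, i.e.\ $c > 0$, to interpret the chain $b > a > -bc$ correctly and to be sure the three cases are exhaustive and the labeling consistent with the sign normalization $b > 0$ (where we may have replaced $\mathbf{M}$ by $-\mathbf{M}$). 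I would verify that under $b > 0$ and the normalization preceding~(\ref{sign of b}), the entry $c = \beta\, p_{k-1}(r_2,\dots,r_k)$ has a determined sign, so that writing $r_k = -c$ with $c > 0$ is legitimate; this compatibility check between the two normalizations is the one place where care is needed rather than routine substitution.
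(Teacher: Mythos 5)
Your three case computations are correct, and they follow exactly the route the paper intends: the paper itself gives no proof of this lemma (it defers to Cohn's Lemma 8.4), but the material it sets up beforehand --- the entries in (\ref{M}), the identity $a-br_k=\alpha\, p_{k-2}(r_1,\dots,r_{k-2})$ extracted from the computation of $\mathbf{M}(\mathbf{T}(r_k))^{-1}$, the inequality (\ref{sign of b}), and discreteness forcing $r_k\geq 1$ when $r_k>0$ --- is precisely what you use, in the same way.

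One correction to your closing paragraph, which rests on a misreading of the statement: in case (iii) the equality $r_k=-c$ is part of the \emph{hypothesis}, not something to be verified, and it is genuinely not derivable from $r_k<0$. Take $\alpha=\beta=1$, $r_1=1$, $r_2=-1$: then $\mathbf{M}=\mathbf{T}(1)\mathbf{T}(-1)=\left(\begin{smallmatrix}0 & 1\\ -1 & 1\end{smallmatrix}\right)$ is a legitimate standard form with $k=2$, $b=1>0$ and $r_k=-1<0$, yet $-c=1\neq r_k$; moreover $a>-bc$ fails for this matrix ($a=0$, $-bc=1$), so the conclusion of (iii) cannot hold for arbitrary $r_k<0$. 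Hence the three cases are simply not exhaustive, there is no ``compatibility check'' between the normalizations to be carried out, and the lemma asserts nothing when $r_k<0$ but $r_k\neq -c$. Note that the application in Theorem \ref{main4} does not need exhaustiveness either: there $a>b>0$, and (\ref{sign of b}) alone rules out $r_k=0$ (which gives $a=a-br_k<b$) and $r_k\leq -1$ (which gives $a<a-br_k<b$, in fact $a<0$), so $r_k>0$ is forced. A final borderline point you inherit from the paper rather than create: the strict inequality in (\ref{sign of b}) comes from Lemma \ref{lemmap} applied to $(r_1,\dots,r_{k-1})$, which requires $k-1\geq 2$; for $k=2$ it can degenerate to equality (e.g.\ $\mathbf{T}(1)\mathbf{T}(0)=\left(\begin{smallmatrix}1 & 1\\ 0 & 1\end{smallmatrix}\right)$ has $a=b$), so your case (ii), like the paper's display (\ref{sign of b}), is strict only away from this degenerate $k=2$, $r_1=1$ situation.
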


Let us remark that all the above results are true for the discretely ordered ring $\text{Int}(\Z)$. 

\medskip

We are finally able to prove the following

\begin{Theorem}\label{main4}
The ring of integer-valued polynomials ${\rm Int}(\Z)$ does not satisfy property {\rm {\GE2}}.
\end{Theorem}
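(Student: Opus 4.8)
The plan is to argue by contradiction: assuming that $\text{Int}(\Z)$ satisfies \GE2, I will exhibit a single matrix $\mathbf{M}\in GL_2(\text{Int}(\Z))$ that cannot be written as a product of elementary matrices. Throughout I use that $\text{Int}(\Z)$ is discretely ordered, with $f>0$ if and only if the leading coefficient of $f$ in the binomial basis $\binom{X}{n}$ is positive, and that $\text{Int}(\Z)^{*}=\{\pm 1\}$. The entire force of the argument comes from Cohn's unique normal form (Theorem \ref{uniqueform}) together with the order inequalities of Lemma \ref{lemmap}, Lemma \ref{Lemma finale}, and the displayed bound (\ref{sign of b}).

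First I choose $\mathbf{M}=\begin{pmatrix} a & b \\ c & d\end{pmatrix}$ with $\det\mathbf{M}\in\{\pm 1\}$ and with the entries $b$ and $d$ non-units. The determinant condition guarantees $\mathbf{M}\in GL_2$, while the non-unit requirement is what forces the normal form to be long. Indeed, if $\mathbf{M}$ were a product of elementary matrices, Theorem \ref{uniqueform} would write it as $\begin{pmatrix}\alpha & 0\\ 0 & \beta\end{pmatrix}\mathbf{T}(r_1)\cdots\mathbf{T}(r_k)$, and reading off (\ref{M}) the non-unit hypothesis excludes $k=0$ (diagonal, so $b=0$), $k=1$ (where $b=\alpha$ is a unit), and $k=2$ (where $d=\beta$ is a unit). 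Hence $k\ge 3$, and after normalizing $b>0$ (replacing $\mathbf{M}$ by $-\mathbf{M}$ if necessary, which preserves non-units) both Lemma \ref{Lemma finale} and (\ref{sign of b}) apply.

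The core of the proof is then a descent. By (\ref{sign of b}) any normal form forces the existence of $r_k\in\text{Int}(\Z)$ with $0<a-br_k<b$ in the discrete order; that is, $a$ must admit a ``remainder'' modulo $b$ lying strictly between $0$ and $b$, and peeling off $\mathbf{T}(r_k)$ replaces $\mathbf{M}$ by $\mathbf{M}\mathbf{T}(r_k)^{-1}$, a product of elementary matrices whose normal form has length $k-1$ (the strict decrease being exactly Lemma \ref{lemmap}). I will choose the entries of $\mathbf{M}$ so that this division-with-remainder process, although it terminates over $\Q[X]$ — where $GL_2$ is generated by elementary matrices — cannot be carried out inside $\text{Int}(\Z)$: at some stage the quotient forced by the requirement that the new off-diagonal entry land in the order-interval $\,(0,\text{divisor})\,$ is a polynomial whose binomial leading coefficient fails to be an integer, or else the sign constraints $r_1\ge 0$ and $r_i>0$ $(1<i<k)$ of the normal form cannot simultaneously be met. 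Either way no admissible normal form exists, contradicting \GE2. The obstruction ultimately reflects that a division step which is legal over the Euclidean ring $\Q[X]$ cannot be performed within $\text{Int}(\Z)$ — precisely the failure of the B\'ezout property.

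The main difficulty is to pin down the explicit matrix and to verify that the obstruction is genuine, i.e. that it survives the entire reduction and is not circumvented by a different choice of the free last parameter $r_k$, which — unlike the middle parameters $r_2,\dots,r_{k-1}$ — carries no sign restriction and could conceivably ``shift'' $a$ into the admissible interval. Controlling this requires careful bookkeeping of degrees and of leading coefficients in the binomial basis along the continued-fraction expansion, and it is exactly here — in turning an integrality obstruction peculiar to $\text{Int}(\Z)$ into the impossibility of Cohn's normal form — that the heart of the argument lies.
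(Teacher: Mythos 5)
Your overall framework coincides with the paper's at the strategic level---work in the discretely ordered ring $\mathrm{Int}(\Z)$, invoke Cohn's unique normal form (Theorem \ref{uniqueform}), and derive a contradiction from the sign constraints in Lemma \ref{lemmap}, Lemma \ref{Lemma finale} and the inequality (\ref{sign of b})---but there is a genuine gap: you never exhibit the matrix $\mathbf{M}$, and you explicitly defer what you yourself call ``the heart of the argument,'' namely choosing the entries and verifying that the obstruction survives the unrestricted choice of the last parameter $r_k$. As written, the proposal is a plan rather than a proof: the assertion that ``at some stage the quotient\dots fails to be an integer, or else the sign constraints cannot be met'' is precisely the statement requiring demonstration, and nothing in your write-up excludes that for every candidate matrix some admissible sequence $r_1,\dots,r_k$ exists. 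The multi-step descent you envision, with bookkeeping of binomial-basis leading coefficients along a continued-fraction expansion, is exactly the delicate part left undone.

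The paper avoids the descent entirely by a choice of matrix that makes the contradiction appear at the very first step. It takes
\[
\mathbf{M}=\begin{pmatrix} 1+2X & 4\\ 1+4X+2\binom{X}{2} & 5+2X \end{pmatrix}\in GL_2(\mathrm{Int}(\Z)),
\]
with $(a,b)=(1+2X,4)$, so $a>b>0$ and case (i) of Lemma \ref{Lemma finale} forces $r_k>0$, while (\ref{sign of b}) forces $0<a-4r_k<4$ in the discrete order. No $r_k\in\mathrm{Int}(\Z)$ satisfies this: if $r_k$ is nonconstant with leading binomial coefficient $a_{mk}\geq 1$, then $a-4r_k$ has leading coefficient $2-4a_{1k}\leq -2$ (if $\deg r_k=1$) or $-4a_{mk}<0$ (if $\deg r_k\geq 2$), hence $a-4r_k<0$; if $r_k\in\Z_{>0}$, then $a-4r_k=(1-4r_k)+2X$ still has leading coefficient $2>0$, hence exceeds the constant $4=b$. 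Note also that your reduction to $k\geq 3$ via non-unit $b$ and $d$ is unnecessary: ruling out $k=0$ (diagonal) and $k=1$ (zero entry) already gives $k\geq 2$, which is all Lemma \ref{Lemma finale} needs. The design principle your proposal was missing is to take $b$ a constant integer strictly larger than the leading coefficient of a nonconstant $a$: then the order-interval $(0,b)$ contains no polynomial of positive degree, so the single division-with-remainder step demanded by the normal form is already impossible, and no induction on $k$ or control of later quotients is required.
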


\begin{proof}
 Let us consider the following matrix of $GL_2(\text{Int}(\Z))$:
\[\mathbf{M}=\begin{pmatrix}
1+2X & 4\\
1+4X+2{X \choose 2} & 5+2X
\end{pmatrix}.\]

Let us assume, for a contradiction, that $\mathbf{M}$ is a product of elementary matrices. Then, by Theorem \ref{uniqueform}, $\mathbf{M}=\begin{pmatrix}
\alpha & 0\\
0 & \beta
\end{pmatrix}\mathbf{T}(r_1)\cdots\mathbf{T}(r_k)$ with $\alpha,\beta\in R^*$, $r_i\in R$, such that
$r_1\geq 0$, $r_i>0$ for $1<i<k$. Note that $k\geq 2$; in fact for $k=0$, $\mathbf{M}=\begin{pmatrix}
\alpha & 0\\
0 & \beta
\end{pmatrix}$ while, for $k=1$, $\mathbf{M}=\begin{pmatrix}
\alpha r_1 & \alpha\\
\beta & 0
\end{pmatrix}$. Moreover, if we set $(a,b)=(1+2X , 4)$, then $b>0$ and $a>b>0$. Therefore, we are in case (i) of Lemma \ref{Lemma finale}, and it must be $r_k>0$. But from (\ref{sign of b}) we also have $b>a-br_k>0$, in particular 
\[a>br_k.\]
Let $r_k=a_{0k}+a_{1k}X+\dots+a_{mk}{X\choose m}$. Thus, since $r_k>0$, then $a_{mk}>0$, hence $a_{mk}\geq 1$ and $4a_{mk}>2$. This shows that $r_k$ must be an element of $\Z$ otherwise we would get $a-br_k<0$. But for such $r_k$ we have $a-br_k=(1-4a_{0k})+2X>4=b$, thus contradicting (\ref{sign of b}). We conclude that $\text{Int}(\Z)$ does not satisfy property {\GE2}.
\end{proof}

\section*{Acknowledgements}

The authors would like to thank the referee for valuable comments and suggestions, that helped to considerably improve the first version of this paper.

\bibliographystyle{plain}

\begin{thebibliography}{10}

\bibitem{AJLS_Electronic}
A.~Alahmadi, S.~K. Jain, A.~Leroy, and A.~Sathaye.
\newblock Decompositions into products of idempotents.
\newblock {\em Electron. J. Linear Algebra}, 29:74--88, 2015.

\bibitem{AJLL}
A.~Alahmadi, S.~K.~Jain, T.~Y. Lam, and A.~Leroy.
\newblock Euclidean pairs and quasi-{E}uclidean rings.
\newblock {\em J. Algebra}, 406:154--170, 2014.

\bibitem{AJL_1}
A.~Alahmadi, S.~K.~Jain, and A.~Leroy.
\newblock Decomposition of singular matrices into idempotents.
\newblock {\em Linear Multilinear Algebra}, 62(1):13--27, 2014.

\bibitem{AJL_2}
A.~Alahmadi, S.~K. Jain, and A.~Leroy.
\newblock Quasi-permutation singular matrices are products of idempotents.
\newblock {\em Linear Algebra Appl.}, 496:487--495, 2016.

\bibitem{BhasRao}
K.~P.~S.~Bhaskara~Rao.
\newblock Products of idempotent matrices over integral domains.
\newblock {\em Linear Algebra Appl.}, 430(10):2690--2695, 2009.

\bibitem{Integer-valued}
P.-J.~Cahen and J.-L.~Chabert.
\newblock {\em Integer-valued polynomials}, volume~48 of {\em Mathematical
  Surveys and Monographs}.
\newblock American Mathematical Society, Providence, RI, 1997.

\bibitem{Chabert}
J.-L.~Chabert.
\newblock Un anneau de {P}r\"ufer.
\newblock {\em J. Algebra}, 107(1):1--16, 1987.

\bibitem{Cohn}
P.~M.~Cohn.
\newblock On the structure of the {${\rm GL}_{2}$} of a ring.
\newblock {\em Inst. Hautes \'Etudes Sci. Publ. Math.}, (30):5--53, 1966.

\bibitem{CZZ}
L.~Cossu, P.~Zanardo, and U.~Zannier.
\newblock Products of elementary matrices and non-{E}uclidean principal ideal
  domains.
\newblock {\em J. Algebra}, 501:182--205, 2018.

\bibitem{Erdos}
J.~A.~Erdos.
\newblock On products of idempotent matrices.
\newblock {\em Glasgow Math. J.}, 8:118--122, 1967.

\bibitem{FL}
A.~Facchini and A.~Leroy.
\newblock Elementary matrices and products of idempotents.
\newblock {\em Linear Multilinear Algebra}, 64(10):1916--1935, 2016.

\bibitem{Fount}
J.~Fountain.
\newblock Products of idempotent integer matrices.
\newblock {\em Math. Proc. Cambridge Philos. Soc.}, 110(3):431--441, 1991.

\bibitem{Fulton}
W.~Fulton.
\newblock {\em Algebraic curves. {A}n introduction to algebraic geometry}.
\newblock W. A. Benjamin, Inc., New York-Amsterdam, 1969.
\newblock Notes written with the collaboration of Richard Weiss, Mathematics
  Lecture Notes Series.

\bibitem{Gilmer}
R.~Gilmer.
\newblock {\em Multiplicative ideal theory}, volume~90 of {\em Queen's Papers
  in Pure and Applied Mathematics}.
\newblock Queen's University, Kingston, ON, 1992.
\newblock Corrected reprint of the 1972 edition.

\bibitem{HannahOmeara}
J.~Hannah and K.~C.~O'Meara.
\newblock Products of idempotents in regular rings. {II}.
\newblock {\em J. Algebra}, 123(1):223--239, 1989.

\bibitem{Kap_el_div}
I.~Kaplansky.
\newblock Elementary divisors and modules.
\newblock {\em Trans. Amer. Math. Soc.}, 66:464--491, 1949.

\bibitem{Kap}
I.~Kaplansky.
\newblock Modules over {D}edekind rings and valuation rings.
\newblock {\em Trans. Amer. Math. Soc.}, 72:327--340, 1952.

\bibitem{Laff1}
T.~J.~Laffey.
\newblock Products of idempotent matrices.
\newblock {\em Linear and Multilinear Algebra}, 14(4):309--314, 1983.

\bibitem{UCFD}
S.~McAdam and R.~G. Swan.
\newblock Unique comaximal factorization.
\newblock {\em J. Algebra}, 276(1):180--192, 2004.


\bibitem{PerSalZan}
G.~Peruginelli, L.~Salce, and P.~Zanardo.
\newblock {\em Idempotent pairs and {PRINC} domains}, volume 170 of {\em
  Springer Proc. Math. Stat.}, pages 309--322.
\newblock Springer, [Cham], 2016.

\bibitem{Ruit}
W.~Ruitenburg.
\newblock Products of idempotent matrices over {H}ermite domains.
\newblock {\em Semigroup Forum}, 46(3):371--378, 1993.

\bibitem{SalZan}
L.~Salce and P.~Zanardo.
\newblock Products of elementary and idempotent matrices over integral domains.
\newblock {\em Linear Algebra Appl.}, 452:130--152, 2014.

\bibitem{Silverman}
J.~H. Silverman.
\newblock {\em The arithmetic of elliptic curves}, volume 106 of {\em Graduate
  Texts in Mathematics}.
\newblock Springer, Dordrecht, second edition, 2009.

\end{thebibliography}
\def\cprime{$'$} \def\cprime{$'$}

\end{document}